\DeclareMathOperator{\vol}{vol}
\DeclareMathOperator{\Out}{Out}
\DeclareMathOperator{\MCG}{MCG}
\DeclareMathOperator{\Hom}{Hom}
\theoremstyle{plain}
\newtheorem{thm}{Theorem}
\newtheorem{lem}[thm]{Lemma}
\newtheorem{prop}[thm]{Proposition}
\newtheorem{cor}[thm]{Corollary}
\newtheorem*{claim*}{Claim}
\newtheorem*{conj*}{Conjecture}
\theoremstyle{definition}
\newtheorem{defn}[thm]{Definition}
\newtheorem*{rmk*}{Remark}
\numberwithin{equation}{section}
\numberwithin{thm}{section}
\newcommand{\fakeenv}{} %%% prints the emptystring
\newenvironment{restate}[2]  %%% restate takes two arguments
{
  \renewcommand{\fakeenv}{#2} %%% So now \fakeenv prints #2
  \theoremstyle{plain}
  \newtheorem*{\fakeenv}{#1~\ref{#2}} %%% so now #2 is the name of a
                                      %%% theorem-like environment.
  \begin{\fakeenv}
}
{
  \end{\fakeenv}
}
\title{\em Irreducibility of a free group endomorphism is a mapping torus invariant}
\author{Jean Pierre Mutanguha}
\address{Department of Mathematical Sciences \\ University of Arkansas \\ Fayetteville \\ AR \newline \indent  \it Web address: \tt \url{https://mutanguha.com/}}
\email{\href{mailto:jpmutang@uark.edu}{jpmutang@uark.edu}}
\date{} % Activate to display a given date or no date (if empty),
\begin{document}

\begin{abstract} We prove that the property of a free group endomorphism being irreducible is a group invariant of the ascending HNN extension it defines. This answers a question posed by Dowdall-Kapovich-Leininger. We further prove that being irreducible and atoroidal is a commensurability invariant. The invariance follows from an algebraic characterization of ascending HNN extensions that determines exactly when their defining endomorphisms are irreducible and atoroidal; specifically, we show that the endomorphism is irreducible and atoroidal if and only if the ascending HNN extension has no infinite index subgroups that are ascending HNN extensions.
\end{abstract}
\maketitle

\section{Introduction}

Suppose S is a hyperbolic surface of finite type and $f:S \to S$ is a {\it pseudo-Anosov} homeomorphism, then the interior of the {\it mapping torus} $M_f$ is a complete finite-volume hyperbolic 3-manifold; this is Thurston’s hyperbolization theorem for 3-manifolds that fiber over a circle \cite{Thu82}. It is a remarkable fact since the hypothesis is a dynamical statement about surface homeomorphisms but the conclusion is a geometric statement about 3-manifolds. In particular, since the converse holds as well, i.e., a hyperbolic 3-manifold that fibers over a circle will have a pseudo-Anosov {\it monodromy}, the property of a fibered manifold having a pseudo-Anosov monodromy is in fact a geometric invariant: if $f:S\to S$ and $f':S' \to S'$ are homeomorphisms whose mapping tori have {\it quasi-isometric} (q.i.) fundamental groups, then $f$ is pseudo-Anosov if and only if $f'$ is pseudo-Anosov.

There are three types of invariants that we study in geometric group theory: group invariants, which contain virtual/commensurability invariants, which contain geometric/q.i.-invariants; the geometric invariants are the most important and difficult to prove. In this paper, we exhibit geometric and commensurability invariants for free-by-cyclic groups inspired by Thurston's hyperbolization theorem and our arguments will be general enough to also apply to ascending HNN extensions of free groups.

There is a rough correspondence between the study of the outer automorphism group of a finitely generated free group $\Out(F)$ and the study of the mapping class group of a hyperbolic surface of finite type $\MCG(S)$. Under this correspondence, surface groups are paired with free groups, surfaces with graphs, and 3-manifolds that fiber over a circle with free-by-cyclic groups.
However, this correspondence is not perfect; pseudo-Anosov mapping classes have three possible analogous properties for free group automorphisms: {\it induced by a pseudo-Anosov (on a punctured surface)}, {\it atoroidal}, and {\it irreducible} (Section~\ref{defs}).
We originally set out to prove that irreducibility was a group invariant of the automorphism's mapping torus and, along the way, we proved more general statements for the first property and the composite property of being both irreducible and atoroidal. Our first result is that the first property is a geometric invariant:

\begin{restate}{Theorem}{geomqi} Suppose $\phi: F \to F$ and $\psi:F' \to F'$ are injective endomorphisms of free groups such that the mapping tori $F *_\phi$ and $F' *_\psi$ are quasi-isometric. Then $\phi$ is induced by a pseudo-Anosov if and only if $\psi$ is induced by a pseudo-Anosov.
 \end{restate}

Thus starting with just a free group automorphism $\phi$ induced by a pseudo-Anosov and a quasi-isometry between $F \rtimes_\phi \mathbb Z$ and $F' *_\psi$, we find that $\psi$ is induced by a surface homeomorphism too. The proof is short but uses deep geometric theorems: Thurston's hyperbolization \cite{Thu82} and Schwartz' rigidity \cite{Sch95}. Since pseudo-Anosovs have dynamics that are very similar to those of irreducible and atoroidal automorphisms, it is likely that the latter property is a geometric invariant too.

\begin{conj*}[q.i.-invariance of irreducibility] Suppose $\phi : F \to F$ and $\psi : F' \to F'$ are automorphisms of free groups such that $F \rtimes_\phi \mathbb Z$ and $F' \rtimes_\psi \mathbb Z$ are quasi-isometric. Then $\phi$ is irreducible and atoroidal if and only if $\psi$ is irreducible and atoroidal.
\end{conj*}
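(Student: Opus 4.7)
The plan is to reduce the conjecture to the commensurability invariance of ``irreducible and atoroidal'' established by the paper's main theorem, by attempting to upgrade the quasi-isometry between the mapping tori to an abstract commensurability. I would split the argument into two stages: first, deducing that atoroidality alone is a q.i.\ invariant; then, under the resulting assumption that both mapping tori are hyperbolic, tackling the harder half concerning irreducibility.

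For the first stage, I would invoke the known equivalence that, for an automorphism $\phi$ of a finitely generated free group $F$, the mapping torus $F \rtimes_\phi \mathbb{Z}$ is word-hyperbolic if and only if $\phi$ is atoroidal (combining the Bestvina--Feighn combination theorem with Brinkmann's converse). Since word-hyperbolicity is a q.i.\ invariant, if $F \rtimes_\phi \mathbb{Z}$ and $F' \rtimes_\psi \mathbb{Z}$ are quasi-isometric and $\phi$ is atoroidal, then $F' \rtimes_\psi \mathbb{Z}$ is hyperbolic, forcing $\psi$ to be atoroidal as well. This stage is largely a packaging of existing results.

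For the second stage, with both groups now hyperbolic free-by-cyclic, I would try to show that a quasi-isometry between such groups implies abstract commensurability; given such an upgrade, the commensurability invariance from this paper would close the argument. As an alternative route, in case full q.i.\ rigidity is unavailable, I would look for a q.i.-invariant feature of the Gromov boundary or of the canonical JSJ decomposition of the mapping torus that detects reducibility directly: heuristically, a proper $\phi$-invariant free factor system should produce a splitting of $F \rtimes_\phi \mathbb{Z}$ whose vertex/edge stabilizers are themselves free-by-cyclic of smaller complexity, and one can hope that this nested structure leaves a coarse geometric trace.

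The main obstacle will be the second stage. Q.i.\ rigidity for hyperbolic free-by-cyclic groups is a major open problem, and the boundary/splitting approach runs into the difficulty that reducibility of an atoroidal $\phi$ produces an invariant structure on $F$, not an honest splitting of the full group $F \rtimes_\phi \mathbb{Z}$, so it need not be immediately visible on the boundary. A promising intermediate target — suggested by the paper's algebraic characterization — would be to prove that the property of \emph{containing an infinite-index subgroup that is itself an ascending HNN extension of a free group} is a q.i.\ invariant among hyperbolic free-by-cyclic groups; this would let one transport the algebraic characterization proved in the paper across the quasi-isometry and thereby conclude.
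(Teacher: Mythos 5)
The statement you are trying to prove is stated in the paper as a conjecture, and the paper does not prove it; Section~\ref{qi} is devoted precisely to explaining why the question remains open. Your proposal correctly identifies the natural two-stage reduction and, equally correctly, identifies where it stalls. Stage one is sound and is essentially already in the paper: by Brinkmann~\cite{Bri00} an atoroidal automorphism has a word-hyperbolic mapping torus, and conversely a nontrivial periodic conjugacy class gives a $\mathbb Z^2$ subgroup obstructing hyperbolicity, so atoroidality of the monodromy is equivalent to word-hyperbolicity of $F\rtimes_\phi\mathbb Z$ and is therefore a q.i.-invariant. Stage two is where the proposal stops being a proof: q.i.\ rigidity for hyperbolic free-by-cyclic groups (upgrading a quasi-isometry to an abstract commensurability) is a major open problem, not a known theorem, and you acknowledge this explicitly. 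As written, the argument is a reduction of one open problem to another, not a proof.

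Two further remarks on your fallback ideas. The boundary/splitting route does not currently salvage the argument: as the paper notes in Section~\ref{qi}, the homeomorphism type of the Gromov boundary (via Kapovich--Kleiner~\cite{KK00}) cannot distinguish reducible from irreducible atoroidal monodromies, because there are reducible atoroidal automorphisms whose mapping tori have Menger-curve boundary; only the quasi-symmetry class of the visual boundary could conceivably detect irreducibility, and no such invariant is known. Your proposed intermediate target --- that ``containing an infinite-index finitely generated noncyclic subgroup of Euler characteristic zero'' is q.i.-invariant among hyperbolic free-by-cyclic groups --- is essentially a restatement of the conjecture: by Theorem~\ref{grpinv} the existence of such a subgroup is exactly the negation of ``irreducible and atoroidal,'' and subgroup structure is not in general visible from the coarse geometry, so this reformulation does not by itself make progress. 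The honest assessment is that your analysis is a faithful account of the state of the art and of the obstruction, but it does not constitute a proof, and the paper itself offers none.
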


Our main result is that being irreducible and atoroidal is a commensurability invariant, which lends credence to the conjecture; again, the argument works for endomorphisms.

\begin{restate}{Theorem}{nongeomcomm} Suppose $\phi: F \to F$ and $\psi:F' \to F'$ are injective endomorphisms of free groups such that $F *_\phi$ and $F' *_\psi$ are commensurable and neither endomorphism has an image contained in a proper free factor of its codomain. Then $\phi$ is irreducible and atoroidal if and only if $\psi$ is irreducible and atoroidal.
 \end{restate}

\begin{rmk*} The hypothesis on the images is necessary. Let $\phi:F_2\to F_2$ be the endomorphism on a free group of rank $2$ given by $\phi(a) = ab$ and $\phi(b) = ba$. Then $\phi$ is irreducible and atoroidal \cite[Example~1.3]{JPM}. Now let $F_2$ be a proper free factor of the free group $F_3$ generated by $\{ a,b,c \}$.
Extend $\phi$ to $\psi:F_3 \to F_3$ by setting $\psi(c) \in F_2$; then $F_3*_{\psi} \cong F_2*_\phi$, but $\psi$ is reducible.\end{rmk*}

The proof of Theorem~\ref{nongeomcomm} follows immediately from an algebraic characterization of $F*_\phi$ that detects exactly when $\phi: F \to F$ is irreducible and atoroidal.

\begin{restate}{Theorem}{grpinv} Let $\phi: F \to F$ be an injective endomorphism of a free group whose image is not contained in a proper free factor of $F$. Then $\phi$ is irreducible and atoroidal if and only if any finitely generated noncyclic subgroup $H \le F*_\phi$ with Euler characteristic $\chi(H)=0$ has finite index. \end{restate}

These results imply that irreducibility is a group invariant, our original motivation:

\begin{restate}{Corollary}{irredgrp} Suppose $\phi: F \to F$ and $\psi:F' \to F'$ are injective endomorphisms of free groups such that $F *_\phi \cong F' *_\psi$ and neither one of the endomorphisms has an image contained in a proper free factor. Then $\phi$ is irreducible if and only if $\psi$ is irreducible.
 \end{restate}

That irreducibility is a group invariant was an open problem \cite[Question~1.4]{DKL17c}. In a series of papers \cite{DKL15, DKL17b, DKL17c}, Dowdall-Kapovich-Leininger studied the dynamics of ({\it word-hyperbolic}) free-by-cyclic groups and the main result of the third paper answered this problem under an extra condition that we now discuss:

Fix a free-by-cyclic group $G$. The {\it BNS-invariant} $\mathcal C(G)$ is an open cone (with convex components) in $H^1(G; \mathbb R) \cong \Hom(G, \mathbb R)$. By rational rays in $H^1(G; \mathbb R)$, we refer to projective classes of homomorphisms $G \to \mathbb R$ with discrete/cyclic image. Without defining the BNS-invariant, we shall state its most relevant property for our purposes: a rational ray in $H^1(G; \mathbb R)$ is {\it symmetric}, i.e., is in $- \mathcal C(G) \cap \mathcal C(G)$ if and only if the corresponding class of homomorphisms $[p] : G \to \mathbb R$ has finitely generated kernel $K$; in this case, $ K $
is free for cohomological reasons \cite{FH99, Bie81, St68}, $G \cong K \rtimes_{\phi}  \mathbb Z$ for some free group automorphism $\phi: K \to K$, and the natural projection $K \rtimes_{\phi} \mathbb Z \to \mathbb Z$ is in the projective class $[p]$. Fix a symmetric rational ray $r_0$ in $\mathcal C(G)$, and let $\phi_0: K_0 \to K_0$ be the corresponding free group automorphism. The presentation complex for $K_0 \rtimes_{\phi_0} \mathbb Z$
has a natural semi-flow with respect to the {\it stable direction} $\mathbb Z_+$. Dowdall-Kapovich-Leininger show in \cite{DKL17b} that getting from $r_0$ to any symmetric rational ray in the same component of $\mathcal C(G)$ amounts to reparametrizing this semi-flow (hence the convexity of the component) and with a careful analysis of this semi-flow, they are able to relate the monodromy stretch factors and rank of kernels for all symmetric rays in the same component. In the third paper \cite{DKL17c}, they conclude this analysis by showing that being irreducible and atoroidal is an invariant of a component of the BNS-invariant; that is, if $\phi_1: K_1 \to K_1$ and $\phi_2: K_2 \to K_2$ are free group automorphisms corresponding to symmetric rays in the same component of $\mathcal C(G)$, then $\phi_1$ is irreducible and atoroidal if and only if $ \phi_2 $ is too. Since this result relied heavily on the analysis of the semi-flow for a component of the BNS-invariant, the technique cannot be extended to work for symmetric rational rays in separate components. Furthermore, their result does not apply to any asymmetric rational rays, i.e., it does not apply to nonsurjective injective endomorphisms. Theorem~\ref{nongeomcomm} addresses both of these concerns.

Masai-Mineyama have also proven a different special case of Theorem~\ref{nongeomcomm} that they call {\it fibered commensurability} \cite{MM17}:
suppose $\phi: F \to F$ and $\psi: F' \to F'$ are free group automorphisms and let $K \le F, K' \le F'$ be finite index subgroups with an isomorphism $\epsilon: K \to K'$ such that the outer classes of the isomorphisms $\epsilon\left.\phi^i\right|_{K}: K \to K'$ and $\left.\psi^j \epsilon\right|_{K}:K \to K'$ are the same for some $i, j \ge 1$; a priori, we restrict ourselves to $i, j$ such that
$\phi^i(K) = K$ and $\psi^j(K') = K'$, i.e., the maps $\epsilon\left.\phi^i\right|_{K}$ and $\left.\psi^j \epsilon\right|_{K}$ make sense. Masai-Mineyama prove that in this case, $\phi$ is irreducible and atoroidal if and only if $\psi$ is too. In other words, being irreducible and atoroidal is a fibered commensurability invariant. However, compared to commensurability, this equivalence is very restrictive since a typical isomorphism of finite index subgroups of free-by-cylic groups need not preserve the fibers one starts with.

Feighn-Handel's theorem that mapping tori of free group endomorphisms are coherent is the main tool that allows us to avoid the obstacles in the two approaches discussed above. We shall explicitly use the {\it preferred presentation} that they prove exists for any finitely generated subgroup of a mapping torus (Theorem~\ref{rewrite}).

We conclude the introduction by noting that little is known about geometric invariants of free-by-cyclic groups in relation to their monodromies. Here are the geometric invariants we know of: Brinkmann \cite{Bri00} showed a free-by-cyclic group is word-hyperbolic if and only if the monodromy is atoroidal; Kapovich-Kleiner \cite[Proof of Corollary 15]{KK00} characterize the {\it Gromov boundary} of a word-hyperbolic free-by-cyclic group: it is a Menger curve if and only if the monodromy fixes no free splitting of the fiber (See also~\cite{Bri02}); Macura \cite{Mac02} proved that two polynomially-growing free group automorphisms that have quasi-isometric mapping tori must have polynomial growth of the same degree and, conjecturally, these mapping tori cannot be quasi-isometric to a mapping torus of an exponentially growing free group automorphism.
\newline

\noindent {\bf Outline.} We give the standard definitions and statements of results that are most relevant to the rest of the article in Section~\ref{defs}. Section~\ref{geom} contains the proof of Theorem~\ref{geomqi} while Section~\ref{nongeom} contains that of Theorem~\ref{nongeomcomm}. Finally, we briefly discuss the q.i.-invariance conjecture in Section~\ref{qi}. In Appendix~\ref{app}, we prove a folk theorem and its converse: a free group endomorphism is irreducible with a nontrivial periodic conjugacy class if and only if it is induced by a pseudo-Anosov on a punctured surface with one orbit of punctures.
\newline

\noindent \textbf{Acknowledgments:} I want to thank Ilya Kapovich for patiently checking my initial proof (and its numerous iterations) and Chris Leininger for sharing with me his work-in-progress in collaboration with Spencer Dowdall and I. Kapovich. The appendix was born out of an insightful discussion with Saul Schleimer on a bus ride and it would not have been written without Mladen Bestvina's encouragement. Last but not least, I thank my advisor Matt Clay for his constant support.

%%%%%%%%%%%%%%%%%%%%%%%%%%%%%%%%%%%%%%%%%%%%%%
\section{Definitions and Preliminaries}\label{defs}

In this paper, $F$ and $F'$ are free groups with finite ranks at least $2$. We will study the ascending HNN extension of a free group and how its properties relate to those of the defining endomorphism. Let $A \le F$ be a subgroup of a free group and $\phi : A \to F$ be an injective
homomorphism, then we define the {\bf HNN extension} of $F$ over $A$ to be:
\[ F*_A = \left\langle\, F, t~|~t^{-1} a t = \phi(a), \forall a \in A \,\right\rangle \]
An HNN extension has a natural map  $F*_A \to \mathbb Z$ that maps $F \mapsto 0$ and $ t \mapsto 1$; we shall refer to this map as the {\bf natural fibration}. For the rest of this paper, we restrict ourselves to HNN extensions defined over free factors. When $A = F$, then we call $F*_F = F*_\phi$ an {\bf ascending HNN extension} or a {\bf mapping torus} of $\phi : F \to F$. The latter terminology stems from the fact that the injective endomorphism $\phi$ can be topologically represented by a graph map on the rose whose topological mapping torus has a fundamental group isomorphic to $F*_\phi$ . Following this analogy, we shall call $F$ the {\bf fiber} and $\phi$ the {\bf monodromy} of the mapping torus. Finally, when $\phi : F \to F$ is an automorphism, we call $F*_\phi = F \rtimes_\phi \mathbb Z$ a {\bf free-by-cyclic group}.
\newline

The following are the properties of monodromies that we will study. An endomorphism $\phi: F \to F$ is {\bf reducible} if there is a free factorization $A_1 * \cdots * A_k * B $ of $F$ where $B$ may be trivial if $k \ge 2$, and a sequence of elements, $(x_i)_{i=1}^k$,  in $F$ such that $\phi(A_i) \le x_i A_{i+1} x_i^{-1}$ where the indices are considered$\mod k$; the collection $\{A_1, \ldots, A_k \}$ is a {\bf $\boldsymbol{\phi}$-invariant proper free factor system}.
An endomorphism $\phi$ is {\bf irreducible} if it is not reducible. It {\bf has a nontrivial periodic conjugacy class} if there is a nontrivial element $a \in F$, element $x \in F$, and integer $n \ge 1$ such that $\phi^n(a) = x a x^{-1}$, equivalently, $i_x \phi^n(a) = a$ for some inner automorphism~$i_x$. Otherwise, it is {\bf atoroidal}. Finally, $\phi$ has {\bf finite-order} if $\phi^n$ is an inner automorphism for some integer $n \ge 1$.
\begin{rmk*} In the literature, the property {\bf fully irreducible}, also known as {\bf irreducible with irreducible powers (iwip)}, is more prevalent than irreducible. It stands for irreducible endomorphisms whose iterates are all irreducible too and being closed under iteration makes it a more natural dynamical property. It was previously known that atoroidal irreducible endomorphisms were fully irreducible \cite{DKL15, JPM} and some infinite-order irreducible endomorphisms were not fully irreducible \cite{BH92}. In Appendix~\ref{app}, we characterize infinite-order irreducible endomorphisms that are not fully irreducible (Corollary~\ref{irredIWIP}).
\end{rmk*}

Feighn-Handel used the following proposition to show the coherence of ascending HNN extensions of free groups; the proposition and the next lemma allow us to rewrite presentations of ascending HNN extension subgroups of ascending HNN extension groups so that fibers of the subgroup lie in fibers of the ambient group.

\begin{thm}[{\cite[Proposition 2.3]{FH99}}]\label{rewrite} Suppose $\Phi:\mathbb F \to \mathbb F$ is an injective endomorphism of a free group (of possibly infinite rank) and $G = \mathbb F*_\Phi = \left\langle\, \mathbb F, t~|~t^{-1} x t = \Phi(x), \forall x\in\mathbb F \,\right\rangle$.
Let $p: G \to \mathbb Z$ be the natural fibration that maps $\mathbb F \mapsto 0$ and $t \mapsto 1$.

If $H \le G$ is finitely generated and $\left.p\right|_H$ is not trivial and not injective, then there is an isomorphism $\iota: F*_A \to H$, natural fibration $\pi: F*_A \to \mathbb Z$, and injective map $n:\mathbb Z \to \mathbb Z$ such that $n(1) \ge 1$ and $\left.p\right|_H \iota = n \pi$, where $F*_A$ is an HNN extension of a finitely generated free group $F$ over a free factor $A \le F$.
\end{thm}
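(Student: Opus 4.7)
I would proceed in three main steps: define $n$ and normalize the generators of $H$, construct the HNN decomposition $H \cong F *_A$, and verify the isomorphism.

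Let $m \ge 1$ be the positive generator of $p(H) \le \mathbb{Z}$ (which exists since $p|_H$ is nontrivial) and set $n : \mathbb{Z} \to \mathbb{Z}$ by $n(k) = km$, so that $n(1) = m$ and $n$ is injective as required. Fix $s \in H$ with $p(s) = m$ and choose finitely many $h_1, \ldots, h_r \in H \cap \ker p$ so that $H = \langle s, h_1, \ldots, h_r \rangle$. Since $\ker p = \bigcup_{k \ge 0} t^k \mathbb{F} t^{-k}$, every $h_i$ and the fiber part of $s$ lies in some $t^N \mathbb{F} t^{-N}$ for $N$ large enough. Replacing $H$ by its conjugate $t^{-N} H t^N$ inside $G$ preserves both the abstract isomorphism class and the restriction $p|_H$, so I may assume each $h_i \in \mathbb{F}$ and $s = w_0 t^m$ with $w_0 \in \mathbb{F}$. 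In this normalized setting, conjugation by $s^{-1}$ restricts to the injective endomorphism $\Psi : \mathbb{F} \to \mathbb{F}$ given by $\Psi(x) = \Phi^m(w_0^{-1} x w_0)$, so $s^{-1} y s \in \mathbb{F}$ for every $y \in \mathbb{F}$.

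Set $F_0 := \langle h_1, \ldots, h_r \rangle$, a finitely generated free subgroup of $\mathbb{F}$. The goal is to find a finitely generated free $F \supseteq F_0$ inside $\mathbb{F}$ such that $A := F \cap sFs^{-1}$ is a free factor of $F$; once such $F$ is in hand, $\phi := \Psi|_A : A \hookrightarrow F$ presents an HNN extension $F *_A$ with natural fibration $\pi$, and the homomorphism $\iota : F *_A \to H$ sending the vertex group to $F$ and the stable letter to $s$ is well-defined (the defining relations hold in $H$), surjective (because $F \supseteq F_0$), and injective by Britton's lemma applied inside the ambient extension $\mathbb{F} *_\Phi$; the equation $p|_H \iota = n \pi$ is automatic on generators. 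I would produce $F$ by iteratively enlarging $F_0$ inside $\mathbb{F}$, at each step adjoining finitely many elements needed to force the intersection $F \cap sFs^{-1}$ to become a free factor of $F$.

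The main obstacle is this last construction: showing that the enlargement procedure terminates with $F$ of finite rank and verifying the free-factor property. Feighn-Handel handle this via a Stallings-fold analysis in $\mathbb{F}$ coupled with a descending-chain argument on free factors of finitely generated free subgroups of $\mathbb{F}$, and producing this ``preferred'' finitely generated fiber is the technical heart of the proof.
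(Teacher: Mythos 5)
This result is cited by the paper as Proposition~2.3 of Feighn--Handel \cite{FH99} and is not proved in the paper, so there is no in-paper argument against which to compare; your sketch is an attempt to reconstruct Feighn--Handel's proof. Your setup (reduce to $m = p(H)_+ > 0$, normalize generators so $s = w_0 t^m$ and the $h_i$ lie in $\mathbb{F}$, observe that conjugation by $s^{-1}$ restricts to an injective endomorphism $\Psi$ of $\mathbb{F}$, seek a finitely generated $F$ with $A := F \cap sFs^{-1}$ a free factor, and define $\phi = \Psi|_A$) is the right framing, and you are correct that the existence and finite generation of such an $F$ is the technical heart of the matter. But as written the proposal is essentially a restatement of the theorem rather than a proof: everything hinges on the enlargement step, which you leave as a black box.

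Two further points need repair even at the level of the outline. First, you enlarge $F_0$ ``inside $\mathbb{F}$,'' but the elements you adjoin must come from $H$ itself (more precisely from $H \cap \ker p$, possibly after a further conjugation by a power of $t$ to push them into $\mathbb{F}$); otherwise $\langle F, s\rangle$ will strictly contain $H$ and $\iota$ will not map onto $H$. Your surjectivity argument ``because $F \supseteq F_0$'' only shows the image contains $H$. Second, injectivity of $\iota$ is not a direct consequence of Britton's lemma in the ambient $\mathbb{F}*_\Phi$: after substituting $s = w_0 t^m$ and collecting powers of $t$, a word that is reduced in $F*_A$ need not obviously be pinch-free in $\mathbb{F}*_\Phi$, since $t$-pinches can occur across several of the original $s^{\pm 1}$-letters. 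The cleaner route, and the one that clarifies why $A = F\cap sFs^{-1}$ is the right edge group, is via the action of $H$ on the Bass--Serre tree of $\mathbb{F}*_\Phi$: one must show the minimal $H$-invariant subtree has quotient a single loop with the asserted vertex and edge groups. Making that work with finitely generated vertex group and free-factor edge group is exactly the content of Feighn--Handel's preferred-presentation construction, and your sketch does not yet supply it.
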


As a consequence of this theorem, all finitely generated subgroups of an ascending HNN extension of a free group are finitely presented with presentation complexes that have contractible universal covers. In particular, we can define their Euler characteristic. The next lemma will be used to show certain subgroups of a mapping torus are also mapping tori if (and only if) they have Euler characteristic zero.

\begin{lem}\label{hwlem} Let $F$ be a free group of finite rank $m$ and $A \le F$ a free factor of rank $n$. Then $\chi(F*_{A}) = n - m$.
\end{lem}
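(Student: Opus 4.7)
The plan is to exhibit a finite $K(F*_A,1)$ and compute its Euler characteristic by counting cells. Fix a free basis $x_1, \ldots, x_m$ of $F$ and a basis $a_1, \ldots, a_n$ of the free factor $A$. The standard HNN presentation of $F*_A$ over these generators has $m+1$ generators (the $x_j$'s together with the stable letter $t$) and $n$ relators, one for each $a_i$, of the form $t^{-1} a_i t \phi(a_i)^{-1}$. The associated presentation 2-complex $P$ therefore has a single vertex, $m+1$ edges, and $n$ 2-cells, so its Euler characteristic as a CW-complex is $1 - (m+1) + n = n - m$.

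The main task is to verify that $P$ is aspherical; once this is done, $\chi(F*_A) = \chi(P) = n - m$ is immediate. For this I would view $P$ as the total space of a graph of spaces in the Bass-Serre sense: a single vertex space $R_F$ (the rose on the $x_j$'s) and a single edge space $R_A$ (the rose on the $a_i$'s), with the two attaching maps $R_A \to R_F$ given by the inclusion $A \hookrightarrow F$ and by a cellular realization of $\phi$. Because $A$ is a free factor of $F$, the inclusion can be modeled cellularly by taking $R_A$ to be a subrose of $R_F$; because $\phi$ is injective, the other attaching map is also $\pi_1$-injective. The vertex and edge spaces are 1-complexes and hence aspherical, so the standard graph-of-spaces argument shows that the universal cover of $P$ decomposes as a tree of copies of the universal cover of $R_F$ glued along lifts of $R_A \times [0,1]$ indexed by the Bass-Serre tree of the HNN decomposition, and this total space is contractible.

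The only step requiring any real care is the asphericity of $P$; everything else is bookkeeping. An entirely algebraic shortcut is to invoke the general formula $\chi(G_0 *_H) = \chi(G_0) - \chi(H)$ for an HNN extension whose associated inclusions are $\pi_1$-injective, applied with $G_0 = F$ and $H = A$: this gives $\chi(F*_A) = (1 - m) - (1 - n) = n - m$ without explicitly exhibiting a classifying space.
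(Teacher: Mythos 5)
Your proposal is correct and takes essentially the same route as the paper: build the presentation 2-complex from a basis of $A$ extended to a basis of $F$, observe asphericity, and count cells to get $1 - (m+1) + n = n - m$. You simply flesh out the asphericity step (which the paper asserts without comment) and add the graph-of-groups Euler characteristic formula as an alternative.
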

\begin{proof} Choose a basis for $A$ and extend it to a basis for $F$. Then the natural finite presentation complex $X$ is aspherical, i.e., $X$ is a $K(F*_A,1)$ space, and \[\chi(F*_{A}) = \chi(X) = 1 - (1+m) + n = n-m. \qedhere\]
\end{proof}

Finally, we give a characterization of irreducible endomorphisms with nontrivial periodic conjugacy classes. As the techniques used in the proof of the following theorem are not related to the rest of the paper, we postpone the proof to the appendix (Proposition~\ref{eg} and Theorem~\ref{thmBH2}).

\begin{thm}\label{thmBH} Let $\phi:F \to F$  be an infinite-order endomorphism.
Then $\phi$ is irreducible and has a periodic nontrivial conjugacy class if and only if it is induced by a pseudo-Anosov homeomorphism of a compact surface $\Sigma_g^b$ that acts transitively on the boundary components.
\end{thm}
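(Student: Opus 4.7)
The plan is to prove the two directions of the equivalence separately, relying on Bestvina--Handel train-track theory for the harder forward direction.

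For the backward direction, suppose $\phi$ is induced by a pseudo-Anosov $f: \Sigma_g^b \to \Sigma_g^b$ acting transitively on the $b$ boundary components. Since the surface has nonempty boundary, $F \cong \pi_1(\Sigma_g^b)$ is a free group of finite rank and $\phi$ is an automorphism; pseudo-Anosovs are infinite-order, so $\phi$ is too. Each boundary component represents a nontrivial conjugacy class in $F$, and transitivity of the boundary action forces $\phi^b$ to fix each such class, producing the required periodic nontrivial conjugacy class. For irreducibility, I would use the standard correspondence between $\phi$-periodic proper free factor systems of $F$ and $f$-periodic essential subsurface decompositions of $\Sigma_g^b$: pseudo-Anosovs preserve no such decomposition, so no $\phi$-invariant proper free factor system can exist.

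For the forward direction, first pass to a power: since $\phi$ is irreducible and infinite-order, some iterate $\phi^m$ is fully irreducible, and it inherits a nontrivial periodic conjugacy class from $\phi$. Apply Bestvina--Handel's classification of fully irreducible automorphisms: an infinite-order fully irreducible automorphism with a periodic nontrivial conjugacy class is geometric, realized by a pseudo-Anosov $f_0$ on a compact surface $\Sigma_0$ with nonempty boundary, whose boundary components correspond precisely to the periodic nontrivial conjugacy classes of $\phi^m$. Next, promote this realization from $\phi^m$ to $\phi$ itself: the collection of periodic nontrivial conjugacy classes is $\phi$-invariant and uniquely determines the peripheral structure of $\Sigma_0$, so a Nielsen-realization-style argument should produce a homeomorphism $f$ of $\Sigma_0$ inducing $\phi$ with $f^m$ isotopic to $f_0$; any root of a pseudo-Anosov is again pseudo-Anosov, so $f$ is pseudo-Anosov. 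Finally, transitivity of $f$ on the boundary follows from irreducibility of $\phi$: if the boundary decomposed into several $f$-orbits, the peripheral subgroups associated to distinct orbits would assemble into a $\phi$-invariant proper free factor system, contradicting irreducibility.

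The main obstacle is the descent step from $\phi^m$ to $\phi$. Bestvina--Handel's theorem delivers a geometric model for the fully irreducible iterate, but not automatically for $\phi$ itself; one must show that the peripheral structure cut out by the periodic conjugacy classes is rigid enough to be preserved by the whole $\phi$-action, and then lift the automorphism back to a homeomorphism. This is the content that I would expect Proposition~\ref{eg} and Theorem~\ref{thmBH2} in the appendix to supply, and it is also where irreducibility (rather than mere full-irreducibility of some power) does the decisive work, since it is what forces the set of peripheral conjugacy classes to form a single $\phi$-orbit.
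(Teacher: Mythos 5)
The proposal has genuine gaps in both directions, and the forward direction fails in a way that erases the actual content of the theorem.

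\textbf{Forward direction.} Your opening move — ``since $\phi$ is irreducible and infinite-order, some iterate $\phi^m$ is fully irreducible'' — is false. For automorphisms, reducibility of $\phi^n$ propagates to $\phi^{nk}$ for all $k$ (the invariant free factor system persists), so $\phi^m$ is fully irreducible for some $m$ if and only if $\phi$ is itself fully irreducible. Hence ``pass to a fully irreducible power'' is vacuous unless $\phi$ was already fully irreducible, and your argument reduces to a case Bestvina--Handel already handled in \cite[Proposition~4.5]{BH92}. The whole point of the theorem is the case where $\phi$ is irreducible but \emph{not} fully irreducible (such $\phi$ exist: a pseudo-Anosov on $\Sigma_g^{b\ge 2}$ acting transitively on the boundary induces one, by Proposition~\ref{eg}), and your reduction silently discards exactly those. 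The paper's proof cannot pass to powers; instead Proposition~\ref{propStab} reworks the Bestvina--Handel stable train-track and indivisible-Nielsen-path argument directly at the level of $\phi$, using periodic (rather than fixed) Nielsen paths, and in the final step builds a $\phi$-\emph{fixed} (not just periodic) proper free factor system to contradict irreducibility rather than full irreducibility. Separately, the theorem is stated for endomorphisms, not automorphisms; you never establish that $\phi$ is surjective. The paper gets this from the doubled surface $D(M)$ being a closed hyperbolic surface, whose fundamental group is coHopfian, in Theorem~\ref{thmBH2}.

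\textbf{Backward direction.} The ``standard correspondence between $\phi$-periodic proper free factor systems and $f$-periodic essential subsurface decompositions'' is not a theorem you can invoke. A $\phi$-invariant free factor of a surface group with boundary need not be carried by any embedded subsurface of $\Sigma$, and conversely pseudo-Anosovs on surfaces with $b\ge 2$ boundary components \emph{do} have periodic proper free factors (the peripheral ones). The paper's Proposition~\ref{eg} does real work here: it passes to the cover $\hat\Sigma\to\Sigma$ corresponding to a putative invariant free factor $A_j$, looks at boundary curves of the core, uses LERF to find a finite cover where their projections become embedded, lifts the pseudo-Anosov, and invokes that periodic simple closed curves of a pseudo-Anosov are peripheral to conclude $A_j$ must itself be boundary-peripheral. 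The contradiction then comes from counting: transitivity would force all $b$ boundary curves to be simultaneous free factors, which is impossible. Your one-line version skips both the reduction-to-peripheral step and the final counting obstruction.
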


This theorem allows us to partition injective endomorphisms of free groups with interesting dynamics into two categories:
\begin{itemize}
\item automorphisms induced by pseudo-Anosov maps.
\item irreducible and atoroidal endomorphisms.
\end{itemize}

%%%%%%%%%%%%%%%%%%%%%%%%%%%%%%%%%%%%%%%%%%%%%
\section{Pseudo-Anosov Monodromies}\label{geom}

The first result in this section is a straightforward application of Stallings' fibration theorem and Nielsen-Thurston classification; it is the first half of the proof that irreducibility of the monodromy is a group invariant of the mapping torus. The subsequent generalizations are given to motivate the q.i.-invariance conjecture.

\begin{prop}\label{geomgrp} Suppose $\phi: F \to F$ and $\psi:F' \to F'$ are injective endomorphisms such that $F *_\phi \cong F' *_\psi$. Then $\phi$ is induced by a pseudo-Anosov if and only if $\psi$ is induced by a pseudo-Anosov.\end{prop}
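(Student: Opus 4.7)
The plan is to prove the forward direction: assuming $\phi$ is induced by a pseudo-Anosov homeomorphism $f : \Sigma \to \Sigma$, I will show the same for $\psi$; the converse follows by symmetry. Being pseudo-Anosov-induced forces $\phi \in \mathrm{Aut}(F)$, so $F *_\phi = F \rtimes_\phi \mathbb Z$ is the fundamental group of the mapping torus $M_\phi$, and Thurston's hyperbolization theorem makes $\mathrm{int}(M_\phi)$ into a complete finite-volume hyperbolic 3-manifold.

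The critical intermediate step is to argue from $F *_\phi \cong F' *_\psi$ that $\psi$ is also an automorphism. The kernel of the natural fibration $p' : F' *_\psi \to \mathbb Z$ equals $\bigcup_{n \ge 0} t^n F' t^{-n}$, an ascending chain of conjugates of $F'$ that is finitely generated if and only if it stabilizes, equivalently if and only if $\psi$ is surjective. The cohomology class $[p']$ always lies in the Bieri--Neumann--Strebel invariant $\Sigma(F' *_\psi)$, since the natural fibration of any ascending HNN extension of a finitely generated group is in $\Sigma$. Transported via the given isomorphism, $[p']$ defines a class in $\Sigma(\pi_1(M_\phi))$; and because the BNS invariant of a compact 3-manifold group is symmetric (Bieri--Neumann--Strebel), this class also lies in $-\Sigma(\pi_1(M_\phi))$. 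Hence $\ker p'$ is finitely generated, so $\psi \in \mathrm{Aut}(F')$.

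With $\psi$ an automorphism, $\ker p' = F'$ is a finitely generated free group normal in $\pi_1(M_\phi)$ with cyclic quotient, so Stallings' fibration theorem yields a fibration of $M_\phi$ over $S^1$ with fiber a compact surface $\Sigma'$ such that $\pi_1(\Sigma') \cong F'$, and a monodromy $f' : \Sigma' \to \Sigma'$ whose induced action on $\pi_1(\Sigma')$ agrees with $\psi$ up to inner automorphism. The Nielsen--Thurston classification then leaves three possibilities for $f'$: periodic, reducible, or pseudo-Anosov. A finite-order or reducible monodromy would produce essential Seifert-fibered or toroidal JSJ pieces in $M_\phi$, contradicting the finite-volume hyperbolic structure on $\mathrm{int}(M_\phi)$, so $f'$ must be pseudo-Anosov, as desired. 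The main obstacle in this plan is the intermediate step of showing that $\psi$ is an automorphism: the BNS-symmetry route above is cleanest, but a more self-contained alternative would aim to apply Theorem~\ref{rewrite} to the subgroup $\alpha^{-1}(F') \le F *_\phi$ and exploit Schreier's index formula (which forbids strictly ascending rank-preserving chains of conjugate subgroups inside a free group) to rule out $\psi$ being a proper endomorphism directly.
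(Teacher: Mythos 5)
Your proposal is correct and follows essentially the same route as the paper: BNS-symmetry of compact 3-manifold groups forces $\psi$ to be an automorphism, Stallings' fibration theorem realizes $\psi$ by a surface homeomorphism, and Nielsen--Thurston classification finishes. The only cosmetic difference is the last step -- you rule out periodic and reducible monodromies via the hyperbolic/JSJ structure from Thurston's hyperbolization, whereas the paper observes more directly that an invariant multicurve or a finite-order monodromy would produce non-peripheral $\mathbb{Z}^2$ subgroups, which a pseudo-Anosov mapping torus cannot have.
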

\begin{proof} Without loss of generality, suppose $\phi$ is induced by a pseudo-Anosov on a compact surface with boundary. Therefore, $G = F \rtimes_\phi \mathbb Z$ is the fundamental group of a compact $3$-manifold that fibers over a circle. Bieri-Neumann-Strebel \cite{BNS} showed that the BNS-invariant of such a fundamental group is symmetric, which implies that $\psi$ is an automorphism and $G \cong F' \rtimes_\psi \mathbb Z$. By Stallings' fibration theorem \cite{St61}, $\psi$ is induced by a homeomorphism of a compact surface with boundary.
Any invariant essential multicurve of the $\psi$-inducing homeomorphism would determine a non-peripheral $\mathbb Z^2$ subgroup of $G$. But since $\phi$ was induced by a pseudo-Anosov, the only $\mathbb Z^2$ subgroups of $G$ are the peripheral ones. Similarly, $\psi$ cannot have finite-order. Thus $\psi$ is induced by an {\it infinite-order irreducible} homeomorphism. By Nielsen-Thurston classification, $\psi$ is induced by a pseudo-Anosov.
\end{proof}

Since the number of orbits of boundary components for a surface homeomorphism is the number of boundary components in the mapping torus, this proposition combines with Theorem~\ref{thmBH} to give:

\begin{cor}\label{irredtorgrp} Suppose $\phi: F \to F$ and $\psi:F' \to F'$ are injective endomorphisms such that $F *_\phi \cong F' *_\psi$. Then $\phi$ is irreducible and has a periodic nontrivial conjugacy class if and only if $\psi$ is irreducible and has a periodic nontrivial conjugacy class.
\end{cor}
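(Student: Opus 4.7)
My plan is to combine Proposition~\ref{geomgrp} with Theorem~\ref{thmBH}, bridged by a count of boundary tori of the mapping torus 3-manifold. Given an isomorphism $F*_\phi \cong F'*_\psi$, Proposition~\ref{geomgrp} immediately gives that $\phi$ is induced by a pseudo-Anosov if and only if $\psi$ is; write $f \colon \Sigma \to \Sigma$ and $f' \colon \Sigma' \to \Sigma'$ for the resulting pseudo-Anosov homeomorphisms. By Thurston's hyperbolization, the interiors of the mapping tori $M_f$ and $M_{f'}$ are finite-volume hyperbolic 3-manifolds with isomorphic fundamental groups, and Mostow-Prasad rigidity makes them homeomorphic; in particular they have the same number of boundary tori.

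The linking observation is that $\partial M_f$ is the mapping torus of $f|_{\partial\Sigma}$, so each $f$-orbit on the components of $\partial\Sigma$ contributes exactly one torus to $\partial M_f$. Hence $f$ acts transitively on $\partial\Sigma$ if and only if $\partial M_f$ is a single torus, and the same equivalence holds for $f'$ and $\partial M_{f'}$. Combining with the first step, $f$ is transitive on $\partial\Sigma$ if and only if $f'$ is transitive on $\partial\Sigma'$, and then Theorem~\ref{thmBH} translates these transitivity conditions back into the statement that $\phi$ (respectively $\psi$) is irreducible with a periodic nontrivial conjugacy class.

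The main technical point, which I expect to be the principal obstacle, is justifying that the number of boundary tori of the mapping torus is a group invariant; I would obtain this from Mostow-Prasad rigidity, though one could alternatively phrase it as counting conjugacy classes of maximal $\mathbb{Z}^2$ subgroups. A secondary bookkeeping issue is the finite-order case, which Theorem~\ref{thmBH} formally excludes: having finite order is itself detectable from $F*_\phi$ (equivalent to $F*_\phi$ being virtually $F \times \mathbb{Z}$), so $\phi$ and $\psi$ share this property, and a parallel argument using Nielsen realization to produce a finite-order surface representative, followed by the same boundary-orbit count, takes care of that case.
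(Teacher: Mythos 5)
Your main line of argument is exactly the one the paper intends: Proposition~\ref{geomgrp} gives matching pseudo-Anosov representatives, the boundary tori of the (homeomorphic, by Mostow--Prasad) hyperbolic mapping tori are in bijection with the monodromy orbits on $\partial\Sigma$, and Theorem~\ref{thmBH} then translates ``transitive on $\partial\Sigma$'' back into ``irreducible with a periodic nontrivial conjugacy class.'' The paper states this as the one sentence preceding the corollary, and your justification of the boundary count (either by Mostow--Prasad or by counting conjugacy classes of maximal $\mathbb Z^2$ subgroups) is correct and fills in the detail the paper leaves implicit.

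You are also right to flag the finite-order case: Theorem~\ref{thmBH} carries an infinite-order hypothesis, and there do exist finite-order irreducible automorphisms (e.g.\ the order-$3$ automorphism $a\mapsto b$, $b\mapsto a^{-1}b^{-1}$ of $F_2$, whose abelianization has no real eigendirection), for which ``has a periodic nontrivial conjugacy class'' is automatic. So this case is not vacuous, and the paper's ``combine'' glosses over it. However, your proposed fix does not close the gap. Culler/Khramtsov/Zimmermann realization realizes a finite-order element of $\Out(F)$ by a finite-order homeomorphism of a finite \emph{graph}, not of a surface; there is no general reason that a finite-order irreducible automorphism is geometric, so ``a finite-order surface representative'' and ``the same boundary-orbit count'' are not available to you. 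What actually needs to be checked in the finite-order case is whether irreducibility alone is an invariant of the (Seifert-fibered, virtually $F\times\mathbb Z$) group $F\rtimes_\phi\mathbb Z$, and that requires a genuinely different argument than the hyperbolic one — for instance, controlling the possible monodromies of fibrations of a virtually-$F\times\mathbb Z$ group directly.
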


Surprisingly, the analogous statement for commensurable groups is much harder to prove. The difficulty lies in showing that if the restriction of an iterate $\psi^n$ to a finite index subgroup is induced by a pseudo-Anosov, then so is $\psi$. We could use the theory of train tracks to adapt the argument in Appendix~\ref{app} and get a comparatively elementary proof; we opt to use Thurston's hyperbolization \cite{Thu82} and Mostow's rigidity \cite{Mar74} to keep the exposition short.

\begin{prop}\label{geomcomm} Suppose $\phi: F \to F$ and $\psi:F' \to F'$ are injective endomorphisms such that $F *_\phi$ and $F' *_\psi$ are commensurable. Then $\phi$ is induced by a pseudo-Anosov if and only if $\psi$ is induced by a pseudo-Anosov.\end{prop}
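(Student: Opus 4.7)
The plan is to leverage the hyperbolic structure from Thurston's hyperbolization and transport it across the commensurability via Mostow rigidity. Without loss of generality, assume $\phi$ is induced by a pseudo-Anosov on a compact surface with boundary. Then Thurston's hyperbolization realizes $G_\phi := F *_\phi \cong F \rtimes_\phi \mathbb{Z}$ as a lattice in $\text{Isom}(\mathbb{H}^3)$. Let $K$ be a common finite-index subgroup of $G_\phi$ and $G_\psi := F' *_\psi$ given by the commensurability; as a finite-index subgroup of the lattice $G_\phi$, $K$ is the fundamental group of a complete finite-volume hyperbolic 3-manifold $\tilde{M}$.

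Next I would promote this lattice structure from $K$ to all of $G_\psi$. After replacing $K$ by its (still finite-index) normal core, assume $K \trianglelefteq G_\psi$ with finite quotient $Q = G_\psi / K$. Mostow rigidity identifies $\text{Out}(K) \cong \text{Isom}(\tilde{M})$, so the conjugation action gives a homomorphism $Q \to \text{Isom}(\tilde{M})$. Let $\mathcal{N}$ be the normalizer of $K$ in $\text{Isom}(\mathbb{H}^3)$, fitting in the short exact sequence $1 \to K \to \mathcal{N} \to \text{Isom}(\tilde{M}) \to 1$, and let $\tilde{Q}$ be the preimage in $\mathcal{N}$ of the image of $Q$. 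Both $G_\psi$ and $\tilde{Q}$ are extensions of $Q$ by $K$ realizing the same outer $Q$-action on $K$; since $K$ has trivial center, $H^2(Q; Z(K)) = 0$ and these extensions are equivalent, embedding $G_\psi \hookrightarrow \text{Isom}(\mathbb{H}^3)$ as a lattice. Because $G_\psi$ is torsion-free (all vertex and edge stabilizers of its Bass--Serre tree are free), this action is free, so $N := \mathbb{H}^3 / G_\psi$ is a complete finite-volume hyperbolic 3-manifold with $\pi_1(N) = G_\psi$.

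Finally, to conclude that $\psi$ itself is pseudo-Anosov, I would invoke Bieri-Neumann-Strebel's theorem that compact 3-manifold groups have symmetric BNS-invariant, so the natural fibration $G_\psi \to \mathbb{Z}$ has finitely generated kernel. This forces the ascending union of copies of $F'$ under $\psi$ to be finitely generated, which is only possible if $\psi$ is surjective, hence an automorphism by the Hopfian property. Thus $G_\psi = F' \rtimes_\psi \mathbb{Z}$, and Stallings' fibration theorem realizes $\psi$ as the monodromy of a surface fibration of $N$; since $N$ is hyperbolic, Nielsen-Thurston classification forces this monodromy to be pseudo-Anosov (no invariant essential multicurve and not of finite order, both ruled out by hyperbolicity).

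The main obstacle is the middle paragraph: matching the abstract extension $1 \to K \to G_\psi \to Q \to 1$ with the geometric extension $1 \to K \to \tilde{Q} \to Q \to 1$ inside $\text{Isom}(\mathbb{H}^3)$. The key ingredient is the vanishing $H^2(Q; Z(K)) = 0$ from the trivial center of the nonelementary hyperbolic 3-manifold group $K$, combined with Mostow rigidity identifying $\text{Out}(K)$ with $\text{Isom}(\tilde{M})$.
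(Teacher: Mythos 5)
Your proof is correct and follows essentially the same route as the paper: Thurston's hyperbolization to realize $F*_\phi$ (hence the common finite-index subgroup $K$) as a finite-covolume torsion-free lattice, Mostow rigidity to promote the lattice structure from $K$ to all of $G_\psi$, and then BNS symmetry plus Stallings' fibration plus Nielsen--Thurston to conclude $\psi$ is pseudo-Anosov; your middle paragraph simply spells out the extension-theoretic content that the paper compresses into the single sentence ``Mostow's rigidity implies the finite group $Q$ acts on $M$ by isometries.'' The one point you should make explicit is that $Q \to \mathrm{Out}(K)$ is injective (so that $\tilde{Q}$ really is an extension of $Q$, not merely of $\mathrm{im}(Q)$): this holds because the centralizer $C_{G_\psi}(K)$ meets the centerless $K$ trivially and hence injects into the finite group $Q$, forcing $C_{G_\psi}(K)=1$ since $G_\psi$ is torsion-free.
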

\begin{proof} Let $H \le F*_\phi$ and $H' \le F'*_\psi$ be isomorphic finite index subgroups. Without loss of generality, suppose $\phi$ is induced by a pseudo-Anosov homeomorphism on a compact surface with boundary. 
By Thurston's hyperbolization theorem,  $F \rtimes_\phi \mathbb Z$, and hence $H' \cong H$, is a fundamental group of compact 3-manifold whose interior $M$ has a complete finite-volume hyperbolic structure. Assume $\pi_1(M) = H' \trianglelefteq F' *_\psi$, then Mostow's rigidity implies the finite group $Q = (F' *_\psi)/H'$ acts on $M$ by isometries. The action is free since $F' *_\psi$ is torsion-free. Thus $F' *_\psi$ is the fundamental group of the cusped hyperbolic 3-manifold $N=M/Q$; we may identify $N$ with the interior of a compact manifold $\bar{N}$ while maintaining $\pi_1(\bar N) \cong F' *_\psi$. By symmetry of the BNS-invariant and Stallings' fibration theorem, we deduce $N$ is a hyperbolic mapping torus whose associated monodromy is a pseudo-Anosov that induces $\psi$.
\end{proof}

We mentioned before the proposition that the geometric proof can be replaced by an argument using the theory of train tracks. However, the next theorem is a geometric statement and there is no apparent way around Thurston’s hyperbolization theorem. In fact, we will also need Schwartz’ rigidity \cite{Sch95} to reduce the theorem to the previous proposition.

\begin{thm}\label{geomqi}Suppose $\phi: F \to F$ and $\psi:F' \to F'$ are injective endomorphisms such that $F *_\phi$ and $F' *_\psi$ are quasi-isometric. Then $\phi$ is induced by a pseudo-Anosov if and only if $\psi$ is induced by a pseudo-Anosov.\end{thm}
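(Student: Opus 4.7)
The plan is to reduce Theorem~\ref{geomqi} to Proposition~\ref{geomcomm} by invoking Schwartz' quasi-isometric rigidity theorem for non-uniform lattices in $\mathrm{Isom}(\mathbb H^3)$. Without loss of generality, assume $\phi$ is induced by a pseudo-Anosov on a compact surface with boundary. Then $F *_\phi \cong F \rtimes_\phi \mathbb Z$ is the fundamental group of the compact mapping torus $M_\phi$, a $3$-manifold with toroidal boundary, and Thurston's hyperbolization theorem \cite{Thu82} identifies the interior of $M_\phi$ with a complete finite-volume hyperbolic $3$-manifold. Hence $F *_\phi$ is isomorphic to a non-uniform lattice $\Gamma$ in $\mathrm{Isom}(\mathbb H^3)$.

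Schwartz' rigidity \cite{Sch95} then applies: any finitely generated group quasi-isometric to such a $\Gamma$ is virtually isomorphic to it, i.e., there is a finite normal subgroup $N \trianglelefteq F' *_\psi$ such that $(F' *_\psi)/N$ is commensurable with $\Gamma$. Since ascending HNN extensions of free groups are torsion-free (their natural presentation $2$-complexes are aspherical, so the groups admit $2$-dimensional $K(G,1)$s), $N$ must be trivial, and therefore $F' *_\psi$ itself is commensurable with $F *_\phi$.

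Applying Proposition~\ref{geomcomm} to this commensurability immediately yields that $\psi$ is induced by a pseudo-Anosov, and the reverse implication follows by symmetry of the hypothesis. The only real obstacle is the appeal to the two deep geometric inputs — Thurston's hyperbolization and Schwartz' rigidity — but once these and the commensurable case are available, the argument is formal: Schwartz promotes quasi-isometry to commensurability, and Proposition~\ref{geomcomm} already handles commensurability.
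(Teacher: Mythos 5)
Your proposal is correct and follows essentially the same route as the paper: Thurston's hyperbolization makes $F*_\phi$ a non-uniform lattice in $\mathrm{Isom}(\mathbb{H}^3)$, Schwartz' q.i.-rigidity together with torsion-freeness of $F'*_\psi$ upgrades the quasi-isometry to commensurability, and Proposition~\ref{geomcomm} finishes. The paper compresses the torsion-free observation into a single clause, but the argument is identical.
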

\begin{proof} Suppose $\phi$ is induced by a pseudo-Anosov. By Thurston's hyperbolization theorem again, $F*_\phi$ is the fundamental group of a complete finite-volume hyperbolic 3-manifold with cusps. In particular, Richard Schwartz proved such groups are {\it q.i.-rigid}. As $F*_\phi$ and $F'*_\psi$ are quasi-isometric torsion-free groups, q.i.-rigidity of $F*_\phi$ implies they are commensurable. Thus $\psi$ is induced by a pseudo-Anosov by Proposition~\ref{geomcomm}.
\end{proof}

This proof underscores how difficult it is to prove the q.i.-invariance conjecture since there is no common model like $\mathbb H^3$ when studying irreducible and atoroidal endomorphisms.

%%%%%%%%%%%%%%%%%%%%%%%%%%%%%%%%%%%%%%%%%%%%%
\section{Irreducible and Atoroidal Monodromies}\label{nongeom}

The goal of this section is to prove that being irreducible and atoroidal is a commensurability invariant. We proved the following result in previous work \cite{JPM}; it essentially characterizes being an irreducible and atoroidal endomorphism.

\begin{prop}[{\cite[Proposition 5.4]{JPM}}]\label{invSbgrp} Suppose $\phi : F \to F$ is an irreducible and atoroidal endomorphism. If nontrivial $K \le F$ is finitely generated and $i_x\phi^n(K) \le K$ for some $n \ge 1$ and inner automorphism $i_x$, then $[F : (i_x \phi^n)^{-k}(K)] < \infty$ for some $k \ge 0$.
\end{prop}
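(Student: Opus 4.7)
My plan is to set $\psi := i_x\phi^n$ and first observe that $\psi$ itself is an injective, irreducible, atoroidal endomorphism of $F$: the irreducible and atoroidal hypothesis forces $\phi$ to be fully irreducible (as noted in the remark of Section~\ref{defs}), so $\phi^n$ is irreducible; composing with an inner automorphism preserves the collection of invariant proper free factor systems of $\phi^n$, and it preserves atoroidality because $\psi^m(a) = y\phi^{mn}(a)y^{-1}$ for some $y\in F$, so a periodic conjugacy class for $\psi$ would yield one for $\phi$. The hypothesis becomes $\psi(K)\le K$, and I must produce $k\ge 0$ with $[F:\psi^{-k}(K)]<\infty$.

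Set $K_k := \psi^{-k}(K)$, giving an ascending chain $K = K_0\le K_1\le\cdots$. Working inside $F*_\psi$ and using $\psi^k(g) = t^{-k}gt^k$ identifies $K_k = F\cap t^kKt^{-k}$, so the union $K_\infty := \bigcup_k K_k$ equals $F\cap\tilde K$ for $\tilde K := \bigcup_k t^kKt^{-k}$. The assumption $\psi(K)\le K$ gives $t^{\pm 1}\tilde K t^{\mp 1}=\tilde K$, and the ascending-HNN normal form in $H := \langle K, t\rangle$ puts each element in the form $wt^m$ with $w\in\tilde K$; in particular $H\cap F = \tilde K\cap F = K_\infty$. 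The key step at this stage is to apply Theorem~\ref{rewrite} to the finitely generated subgroup $H$: the natural fibration $p$ restricts nontrivially ($p(t)=1$) and non-injectively ($K\ne 1$ lies in the kernel), so one obtains an isomorphism $\iota:F'*_{A'}\to H$ whose image of $F'$ equals $H\cap\ker p = K_\infty$. Hence $K_\infty$ is finitely generated; a finite generating set already lies in some $K_{k^*}$, forcing $K_{k^*}=K_\infty$ and stabilizing the chain. The stable subgroup $K^* := K_{k^*}$ is then a finitely generated nontrivial subgroup with $\psi(K^*)\le K^*$ and $\psi^{-1}(K^*)=K^*$.

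The remaining step, which I expect to be the main obstacle, is to conclude that this strongly $\psi$-invariant $K^*$ has finite index in $F$. Assuming for contradiction $[F:K^*]=\infty$, I would represent $\psi$ by a train track map $f:G\to G$ with $\pi_1(G)=F$ and consider the infinite cover $\tilde G\to G$ corresponding to $K^*$, with its finite Stallings core $C\subset\tilde G$. Two-sided $\psi$-invariance lifts $f$ to $\tilde f:\tilde G\to\tilde G$ with tightly constrained interaction between $C$ and its complement. Exploiting the Perron--Frobenius dynamics of the irreducible transition matrix of $f$, one should extract either a proper $\psi$-invariant free factor system from the passage $C\hookrightarrow\tilde G$ (contradicting full irreducibility) or a recurrent lifted leaf producing a periodic conjugacy class in $F$ (contradicting atoroidality). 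Making this dynamical extraction rigorous is the crux of the argument; the formal coherence-based steps above reduce the entire problem to precisely this train-track step, which is the substantive content of \cite[Proposition~5.4]{JPM}.
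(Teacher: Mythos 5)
This proposition is not proved in the paper at all: it is imported as a black box from \cite[Proposition~5.4]{JPM}, so there is no in-paper argument to compare your attempt against. Taken on its own terms, though, your proposal has a concrete gap well before the step you flag as ``the crux.''

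The problem is in the passage from Theorem~\ref{rewrite} to finite generation of $K_\infty$. You claim that the isomorphism $\iota\colon F'*_{A'}\to H$ sends $F'$ onto $H\cap\ker p$, and that $H\cap\ker p = K_\infty$. Neither equality is right. First, the relation $\left.p\right|_H\iota = n\pi$ only gives $\iota(\ker\pi) = H\cap\ker p$, and $\ker\pi$ is the \emph{normal closure} of $F'$ in $F'*_{A'}$, not $F'$ itself. That normal closure is $\bigcup_j s^jF's^{-j}$ when $A'=F'$ (an ascending, typically strictly increasing, union) and is an even larger, infinitely generated graph-of-groups kernel when $A'$ is a proper free factor; it coincides with $F'$ only when $A'=F'$ and the associated endomorphism is onto, which is exactly the kind of thing you have not established. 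Second, $H\cap\ker p$ equals $\tilde K = \bigcup_k t^kKt^{-k}$, whereas $K_\infty = F\cap\tilde K$; these differ unless $\tilde K\subseteq F$, which fails for nonsurjective $\psi$. So the cited theorem does not deliver finite generation of $K_\infty$, and consequently you have not shown the chain $K_0\le K_1\le\cdots$ stabilizes. In fact stabilization is essentially equivalent to the statement being proved (a finite-index subgroup of a finitely generated free group is finitely generated), so the ``formal coherence-based steps'' have not actually reduced anything.

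The remaining step you sketch --- that a finitely generated $K^*$ with $\psi(K^*)\le K^*$ and $\psi^{-1}(K^*)=K^*$ must have finite index, to be extracted from train-track dynamics --- is, as you yourself note, the substantive content of the result and is left entirely at the level of a plan. Combined with the gap above, the proposal does not constitute a proof; it is a (partially incorrect) setup followed by a deferral to the very proposition it is meant to establish. If you want a self-contained argument here, you would need both to fix the finite-generation step (e.g., by arguing directly that $\chi(H)=0$ and that the HNN extension produced by Theorem~\ref{rewrite} is ascending with surjective monodromy) and to actually carry out the Stallings-graph/train-track analysis rather than gesture at it.
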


The key idea in this section is to use this proposition to characterize in terms of the mapping torus exactly when a monodromy is irreducible and atoroidal. To this end, we need the following property to deal with nonsurjective monodromies.

\begin{defn} An injective endomorphism $\phi:F \to F$ is {\bf minimal} if its image $\phi(F)$ is not contained in a proper free factor of $F$. Automorphisms and irreducible endomorphisms are clearly minimal.\end{defn}

Minimality is closed under composition (See \cite[Proof of Theorem 5.5]{JPM}). We now have enough to state and prove the main result:

\begin{thm}\label{grpinv} Suppose $\phi: F \to F$ is a minimal injective endomorphism and let $G = F*_\phi$. Then $\phi$ is irreducible and atoroidal if and only if any finitely generated noncyclic subgroup of $H \le G$ with Euler characteristic $\chi(H) = 0$ has finite index.\end{thm}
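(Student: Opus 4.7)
For the forward direction I assume $\phi$ is irreducible and atoroidal and take a finitely generated noncyclic $H \le G$ with $\chi(H) = 0$. Ruling out the cases where $p|_H$ is trivial (then $H$ lies in a conjugate of $F$, so is free with $\chi(H) = 0$, hence cyclic) or injective (then $H$ embeds in $\mathbb{Z}$, so cyclic), I apply Theorem~\ref{rewrite} to write $H$ as an HNN extension $F_1 *_A$ over a free factor; since $\chi(H) = 0$, Lemma~\ref{hwlem} forces $A = F_1$, so $H$ is itself an ascending HNN extension $\langle K, \tau\rangle$ with $p(\tau) = n(1) \ge 1$. Conjugating $H$ inside $G$ preserves $[G:H]$ and lets me arrange $K \le F$, and Britton-reducing $\tau$ in $G = F*_\phi$ gives $\tau = t^{n(1)} w$ with $w \in F$. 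The condition $\tau^{-1}K\tau \le K$ then becomes $\Phi(K) \le K$ for $\Phi := i_{w^{-1}} \phi^{n(1)}$.

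Proposition~\ref{invSbgrp} now yields $k \ge 0$ with $L := \Phi^{-k}(K) \le F$ of finite index. Since $\tau^{-1} x \tau = \Phi(x)$ for $x \in F$ inside $G$, the inclusion $\Phi^k(L) \le K \le H$ combined with $\tau^k \in H$ gives $L \le H$, hence $H = \langle L, \tau\rangle$ (using $K \le L$ from $\Phi$-invariance). Setting $N := \langle F, \tau\rangle = \langle F, t^{n(1)}\rangle$, the cofinality of the chains $\{t^{jn(1)} F t^{-jn(1)}\}_j$ and $\{t^j F t^{-j}\}_j$ inside $\ker p$ yields $\ker p|_N = \ker p|_G$, so $[G:N] = n(1)$. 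Both $N = F*_\Phi$ and $H = L*_{\Phi|_L}$ carry $\tau$ as stable letter and have $p$-image $n(1)\mathbb{Z}$, so $[N:H] = [\ker p|_N : \ker p|_H]$; writing these as increasing unions $\bigcup_j \tau^j F \tau^{-j}$ and $\bigcup_j \tau^j L \tau^{-j}$ with level-wise index $[F:L]$, a direct-limit counting argument bounds $[N:H] \le [F:L] < \infty$, so $[G:H]$ is finite.

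For the converse I argue the contrapositive. If $\phi^n(a) = xax^{-1}$ for a nontrivial $a \in F$, then $y := t^n x$ commutes with $a$ and has $p(y) = n > 0$, so $\langle a, y\rangle \cong \mathbb{Z}^2$ is finitely generated, noncyclic, with $\chi = 0$, and has infinite index since $G$ contains nonabelian free subgroups. If instead $\phi$ is reducible with invariant free factor system $\{A_1, \ldots, A_k\}$ and conjugators $x_i$, iterating yields $y \in F$ with $\phi^k(A_1) \le yA_1 y^{-1}$, and $H := \langle A_1, t^k y\rangle$ is an ascending HNN extension $A_1 *_{i_{y^{-1}} \phi^k|_{A_1}}$, hence finitely generated and noncyclic with $\chi(H) = 0$. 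To show $[G:H] = \infty$, I use that $\phi^k$ preserves the normal closure $\langle\langle A_1\rangle\rangle_F$ and descends to an endomorphism $\overline{\phi^k}$ of the nontrivial free group $\bar F := F/\langle\langle A_1\rangle\rangle_F$; this induces a surjection from $G^{(k)} := \langle F, t^k\rangle$ (of index $k$ in $G$) onto a group built from $\bar F$ and an injective quotient of $\overline{\phi^k}$, under which $H$ maps into a cyclic subgroup while the target remains non-virtually-cyclic (its fiber is a nontrivial free group), forcing $[G:H] = \infty$.

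The main obstacle is the forward direction's final step: realigning $H$'s abstract HNN structure with $G$'s so that the same stable letter $\tau$ serves both, and then estimating the direct-limit index of the fibers from the level-wise bound $[F:L]$. The converse's reducible case also requires care to check that the descended endomorphism does not collapse $\bar F$ entirely; this uses injectivity of $\phi$, which prevents $\phi(F)$ from lying inside $\langle\langle A_1\rangle\rangle_F$.
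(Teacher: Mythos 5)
Your forward direction (and the $\mathbb Z^2$ part of the converse) is essentially the paper's argument: you invoke Theorem~\ref{rewrite} to write $H$ as an ascending HNN extension over a finitely generated free fiber $K$, place $K$ inside a conjugate of $F$ (the paper works at level $t^iFt^{-i}$; you conjugate by $t^{-i}$ — equivalent), apply Proposition~\ref{invSbgrp} to get a finite-index preimage $L$, and then compare indices via the increasing unions of fibers. The paper compresses the final index count into one sentence; your direct-limit argument is the same idea made explicit. This half is correct.

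The reducible half of the converse is where you diverge and where there is a genuine gap. The paper argues directly: if $[G:\langle A_1, t^k y\rangle] < \infty$ then $F \cap H = \bigcup_j \Phi^{-j}(A_1)$ is a finite-index subgroup of $F$, so it stabilizes at some $\Phi^{-k_0}(A_1)$, and this forces $\Phi^{k_0}(F)$ into a conjugate of the proper free factor $A_1$, contradicting minimality. You instead try to build a surjection from $G^{(k)}$ onto an ascending HNN extension of $\bar F = F/\langle\langle A_1\rangle\rangle_F$ under which $H$ becomes cyclic while the target stays non-virtually-cyclic. For that to work, the stable kernel of $\overline{\phi^k}$ must be a \emph{proper} subgroup of $\bar F$, and your justification — that injectivity of $\phi$ prevents $\phi(F) \le \langle\langle A_1\rangle\rangle_F$ — is false. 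Even minimality does not save it: take $F = \langle a,b\rangle$, $\phi(a) = a^2$, $\phi(b) = bab^{-1}$. This $\phi$ is injective, minimal (its image $\langle a^2, bab^{-1}\rangle$ has rank $2$, hence cannot lie in a cyclic free factor of $F_2$), and reducible with invariant factor $A_1 = \langle a\rangle$, yet $\phi(F) \le \langle\langle a\rangle\rangle_F$. In your quotient, $\overline{\phi}(\bar b) = \bar 1$, the stable kernel is all of $\bar F$, and the "injective quotient" of $\overline{\phi}$ lives on the trivial group, so the target collapses to $\mathbb Z$, which is virtually cyclic, and the argument proves nothing. (The theorem is still true here: $F \cap \langle a, t\rangle = \langle a\rangle$ has infinite index in $F$, which is exactly what the paper's approach detects.) You need to replace this quotient argument with something along the lines of the paper's: the finite-index hypothesis forces a finitely generated $\Phi$-preimage chain of the free factor $A_1$ to stabilize at finite index, and a Bass--Serre tree argument (finite orbit $\Rightarrow$ fixed vertex) then contradicts minimality.
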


\begin{proof} If $\phi$ is not atoroidal, then $G$ has a $\mathbb Z^2$ subgroup that necessarily has infinite index as $F$ is not cyclic.
If $\phi$ is reducible, then there exists a proper free factor $A \le F$, $x \in F$, and $n \ge 1$ such that $\phi^n(A) \le xAx^{-1}$. Then, using normal forms, $A *_{i_x \phi^n} \cong \langle A, t^n x \rangle \le G$. Suppose $[G: \langle A, t^n x \rangle] < \infty$, then $[F: F \cap \langle A, t^n x \rangle] < \infty$.
Set $K = F \cap \langle A, t^n x \rangle = \cup_k(i_x \phi^n)^{-k}(A)$. As $[F: K] < \infty$, $K$ is finitely generated and there exists a $k_0 \ge 1$ such that $K = (i_x \phi^n)^{-k_0}(A)$. The statements $[F:K] < \infty$, $K = (i_x \phi^n)^{-k_0}(A)$, and $A$ is a proper free factor of $F$ imply $F = (i_x \phi^n)^{-k_0}(A)$, which contradicts the minimal assumption on $\phi$.
Therefore, $[G: \langle A, t^n x \rangle] = \infty$ as needed. This concludes the reverse direction.

For the forward direction, suppose $\phi$ is irreducible and atoroidal and let $H \le G$ be a finitely generated noncyclic group with $\chi(H) = 0$. We need to show $[G: H] < \infty$. Let $p:G=F*_\phi \to \mathbb Z$ be the natural fibration. Note that $\ker p = \cup_i t^i F t^{-i}$. Then $\left.p\right|_H$ is not trivial since $\ker p$ is locally free yet $H$ is finitely generated but not free: it is not cyclic and $\chi(H) = 0$. Also $\left.\ker p\right|_H = H \cap \ker p$ is not trivial as $H \not \cong \mathbb Z$.

As $H$ is finitely generated and $\left.p\right|_H$ is not trivial and not injective, by Proposition~\ref{rewrite}, there is an isomorphism $\iota: F_m*_A \to H$, natural fibration $\pi: F_m*_A \to \mathbb Z$, and injective map $n:\mathbb Z \to \mathbb Z$ such that $n(1) \ge 1$ and $\left.p\right|_H \iota = n \pi$, where $F_m*_A$ is an HNN extension of a finitely generated free group $F_m$ over a free factor $A \le F_m$.
As $\chi(H) = 0$, $A$ is not a proper free factor by Lemma~\ref{hwlem}. Therefore, $H \cong F_m*_{F_m}$.
As $F_m$ is finitely generated, there is an $i \ge 0$ such that $K = \iota(F_m) \le t^i F t^{-i} \le \ker p$. Fix large enough~$i$, then $K \le t^i F t^{-i}$ is a finitely generated nontrivial subgroup such that $x \in t^i F t^{-i}$ and $i_x\bar\phi^n(K) \le K$, where $n = n(1)$ and $\bar \phi$ is the natural extension of $\phi$ to $t^i F t^{-i}$.
As $\phi$ is irreducible and atoroidal, so is~$\bar \phi$. By Proposition~\ref{invSbgrp}, $(i_x \bar\phi^n)^{-k}(K)$ has finite index in $t^i F t^{-i}$ for some $k \ge 0$. Therefore, $H = \langle (t^n x)^k K (t^n x)^{-k}, t^n x \rangle$ has finite index in $G = \langle t^i F t^{-i}, t \rangle$.
\end{proof}

The next lemma shows that having a finitely generated noncyclic subgroup with infinite index and Euler characteristic zero is a commensurability invariant.

\begin{lem}Let $G$ be a finitely generated torsion-free group and $G' \le G$ be a finite index subgroup. Then $G$ has an infinite index finitely generated noncyclic subgroup $H$ with Euler characteristic $\chi(H) = 0$ if and only if $G'$ has such a subgroup.
\end{lem}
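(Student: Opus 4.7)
The plan is to move between $G$ and $G'$ by the two obvious operations: intersection with $G'$ in one direction, inclusion into $G$ in the other, and then verify that the four properties (finitely generated, noncyclic, $\chi=0$, infinite index) survive the transfer.

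The reverse direction is essentially immediate. If $H\le G'$ is finitely generated, noncyclic, has $\chi(H)=0$, and $[G':H]=\infty$, then viewing $H$ as a subgroup of $G$ preserves the intrinsic properties, and the tower formula $[G:H]=[G:G']\cdot[G':H]$ forces $[G:H]=\infty$.

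For the forward direction, starting from $H\le G$ with the stated properties, I would set $H'=H\cap G'$. Since $[G:G']<\infty$, the index $[H:H']$ is finite, so $H'$ is finitely generated; the tower formula also gives $[G:H']=[G:H]\cdot[H:H']=\infty$ and hence $[G':H']=\infty$. Multiplicativity of Euler characteristic on finite-index subgroups of groups admitting a finite aspherical presentation complex (the setting in which $\chi$ appears here, cf.\ the computation in Lemma~\ref{hwlem}, and which passes to finite covers) yields $\chi(H')=[H:H']\cdot\chi(H)=0$.

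The only delicate point, and the one step where I expect actual work, is to show that $H'$ is noncyclic. Since $G$ is torsion-free, so is $H'$, so ``cyclic'' here means $H'\cong\mathbb Z$. Suppose for contradiction that $H'\cong\mathbb Z$; then $H$ is a finitely generated torsion-free group containing $\mathbb Z$ with finite index, hence is finitely generated with two ends. By the standard classification of finitely generated groups with two ends (via Stallings/Freudenthal), any torsion-free such group is itself isomorphic to $\mathbb Z$, contradicting the noncyclicity of $H$. This closes the only genuine obstacle and completes the forward direction.
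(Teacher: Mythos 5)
Your argument is correct and follows essentially the same route as the paper: take $H'=H\cap G'$, use that $[H:H']<\infty$ to transfer finite generation, multiplicativity of $\chi$, and infinite index, and for noncyclicity invoke that the only torsion-free virtually-$\mathbb Z$ group is $\mathbb Z$. The paper states this last fact without proof, whereas you justify it via the classification of two-ended groups; that is the only difference.
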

\begin{proof}
The reverse direction is obvious. Suppose $H \le G$ is an infinite index finitely generated noncyclic subgroup with Euler characteristic $\chi(H) = 0$. Then $H \cap G'$ has finite index in $H$. In particular, it is finitely generated with infinite index in $G$ and hence $G'$. Furthermore, $[ H: H \cap G'] < \infty$ implies $\chi(H \cap G') = [H: H \cap G'] \cdot \chi(H) = 0$ and $H \cap G'$ is noncyclic since the only virtually cyclic torsion-free group is $\mathbb Z$, yet $H$ is not cyclic.
\end{proof}

This implies the invariance of irreducibility for atoroidal endomorphisms.

\begin{thm}\label{nongeomcomm}Suppose $\phi: F \to F$ and $\psi:F' \to F'$ are minimal injective endomorphisms such that $F *_\phi$ and $F' *_\psi$ are commensurable. Then $\phi$ is irreducible and atoroidal if and only if $\psi$ is irreducible and atoroidal.\end{thm}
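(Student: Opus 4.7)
The plan is to derive Theorem~\ref{nongeomcomm} as a short formal consequence of Theorem~\ref{grpinv} combined with the preceding Lemma. The key point is that Theorem~\ref{grpinv} has already reformulated the dynamical property of $\phi$ being irreducible and atoroidal as a purely group-theoretic property of the mapping torus $F*_\phi$: every finitely generated noncyclic subgroup of Euler characteristic zero has finite index. Once the statement is cast in this algebraic form, it is patently preserved under commensurability via the preceding Lemma, which is exactly what is needed.

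In more detail, I would proceed as follows. First, apply Theorem~\ref{grpinv} in both directions: under the minimality hypothesis, $\phi$ is irreducible and atoroidal if and only if $F*_\phi$ contains no finitely generated noncyclic subgroup of infinite index with Euler characteristic zero, and similarly for $\psi$ and $F'*_\psi$. Next, unpack commensurability: there exist finite index subgroups $L \le F*_\phi$ and $L' \le F'*_\psi$ together with an isomorphism $L \cong L'$. Applying the preceding Lemma to the pairs $(F*_\phi, L)$ and $(F'*_\psi, L')$ --- both ambient groups being finitely generated and torsion-free --- shows that the existence of an infinite index finitely generated noncyclic subgroup with $\chi = 0$ passes in both directions between each group and its chosen finite index subgroup. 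Combining with the isomorphism $L \cong L'$, this property holds for $F*_\phi$ if and only if it holds for $F'*_\psi$, and Theorem~\ref{grpinv} then converts back to the desired equivalence.

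I do not anticipate any genuine obstacle, since all the substantive work has already been completed in Theorem~\ref{grpinv} (whose proof draws on Feighn-Handel's rewriting Theorem~\ref{rewrite} and the earlier Proposition~\ref{invSbgrp}). The only routine verifications are that $F*_\phi$ and $F'*_\psi$ are finitely generated (automatic from $F$ and $F'$ being so) and torsion-free (immediate from the Bass-Serre tree for the ascending HNN extension, whose vertex stabilizers are conjugate into the free groups $F$ or $F'$).
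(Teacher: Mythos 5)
Your proposal is correct and matches the paper's own (implicit) argument: the paper states the Lemma, says ``This implies the invariance of irreducibility for atoroidal endomorphisms,'' and then states Theorem~\ref{nongeomcomm} with no further proof — precisely the two-step reduction via Theorem~\ref{grpinv} and the Lemma that you spell out. Your extra verifications (finite generation and torsion-freeness of the mapping tori, needed to invoke the Lemma) are accurate and worth noting explicitly.
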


Finally, we combine this theorem with Corollary~\ref{irredtorgrp} to get:
\begin{cor}\label{irredgrp}Suppose $\phi: F \to F$ and $\psi:F' \to F'$ are minimal injective endomorphisms such that $F *_\phi \cong F' *_\psi$. Then $\phi$ is irreducible if and only if $\psi$ is irreducible.\end{cor}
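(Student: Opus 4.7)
My plan is to leverage the definitional dichotomy for irreducible endomorphisms: such an endomorphism is either atoroidal or has a nontrivial periodic conjugacy class, and these two sub-cases are mutually exclusive by the very definition of \emph{atoroidal}. Both refined properties have already been established as group invariants of the mapping torus, so the corollary will reduce to a two-case application of prior results.

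So assume $F *_\phi \cong F' *_\psi$ and $\phi$ is irreducible; I must show $\psi$ is irreducible. First, if $\phi$ is also atoroidal, then Theorem~\ref{nongeomcomm} applied to the isomorphism (an isomorphism being a special case of commensurability), together with the minimality of both $\phi$ and $\psi$ given by hypothesis, yields that $\psi$ is irreducible and atoroidal. Otherwise $\phi$ has a nontrivial periodic conjugacy class, and then Corollary~\ref{irredtorgrp}, which needs no extra hypothesis beyond $F *_\phi \cong F' *_\psi$, yields that $\psi$ is irreducible (with a periodic nontrivial conjugacy class). In either case $\psi$ is irreducible, and the converse direction follows by swapping the roles of $\phi$ and $\psi$.

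There is no genuine obstacle here: the real work is already absorbed into Theorem~\ref{nongeomcomm} (which in turn rests on the algebraic characterization in Theorem~\ref{grpinv}) and into Corollary~\ref{irredtorgrp}. The corollary at hand is essentially a bookkeeping step that combines the two group-invariant results along the two branches of the dichotomy. It is worth noting that the minimality hypothesis is indispensable only on the atoroidal branch; the branch with a periodic nontrivial conjugacy class inherits its group-invariance from Corollary~\ref{irredtorgrp} without any minimality assumption.
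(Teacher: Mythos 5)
Your proof is correct and matches the paper's intended argument: the paper states the corollary simply as a combination of Theorem~\ref{nongeomcomm} and Corollary~\ref{irredtorgrp}, and your two-case split along the atoroidal/nontrivial-periodic-conjugacy-class dichotomy is exactly how those two results are meant to be combined. Your side remark that minimality is only needed in the atoroidal branch is also an accurate reading of the hypotheses.
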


%%%%%%%%%%%%%%%%%%%%%%%%%%%%%%%%%%%%%%%%%%%%%
\section{Q.I.-Invariance Conjecture}\label{qi}

Little is known about the geometry of mapping tori whose monodromies are irreducible and atoroidal. Brinkmann \cite{Bri00} proved that an atoroidal automorphism of a free group has a word-hyperbolic mapping torus and Kapovich-Kleiner \cite[Corollary 15]{KK00} show the following are equivalent for such a mapping torus:
\begin{itemize}
  \item it has (Gromov) boundary homeomorphic to a Menger curve;
  \item it does not split over cyclic subgroups;
  \item the monodromy does not fix a {\it free splitting} of $F$. (See also \cite{Bri02})
\end{itemize}

Unfortunately, the topology of the boundary cannot detect irreducible atoroidal monodromies as there are reducible atoroidal automorphisms that do not fix a free splitting.
The boundary of a word-hyperbolic group can be given a {\it visual metric} that is unique up to {\it quasi-symmetry} and this quasi-symmetry class is a complete geometric invariant of the group, i.e., word-hyperbolic groups are quasi-isometric if and only if their visual boundaries are quasi-symmetric \cite{Pau96}. Although a reducible and an irreducible automorphism can have word-hyperbolic mapping tori with homeomorphic boundaries, the conjecture asserts that the visual boundaries cannot be quasi-symmetric!

%%%%%%%%%%%%%%%%%%%%%%%%%%%%%%%%%%%%%%%%%%%%%
\appendix
\section{}\label{app}

The following is a folk theorem that was used to construct examples of fully irreducible automorphisms along with examples that are infinite-order irreducible but not fully irreducible \cite[Example 1.4]{BH92}. At the end of the appendix, we will show that the latter examples are complete/exhaustive (Corollary~\ref{irredIWIP}).

\begin{prop}\label{eg} If $\phi:F \to F$ is an automorphism induced by a pseudo-Anosov homeomorphism of a compact surface $\Sigma_g^{b \ge 1}$ that acts transitively on the boundary components, then $\phi$ is irreducible and it is fully irreducible if and only if $b = 1$.
\end{prop}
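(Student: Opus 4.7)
The plan is to split the proof into two parts: establishing that $\phi$ is irreducible for all admissible $b$, and then characterizing full irreducibility in terms of $b$.

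\textbf{Part 1 (irreducibility of $\phi$).} My approach here is to combinatorialize the dynamics via train tracks. Collapsing the stable foliation of $f$ yields a spine $\Gamma \subset \Sigma_g^b$ that carries $\pi_1$, and $f$ descends to a train track map $g \colon \Gamma \to \Gamma$ in the sense of Bestvina--Handel. Because $f$ multiplies the transverse measure of the unstable foliation by $\lambda > 1$ and that foliation is minimal, the transition matrix of $g$ is primitive (Perron--Frobenius). A train track representative whose transition matrix is primitive admits no proper invariant subgraph carrying a proper free factor system of $F$; equivalently, $\phi$ has no invariant proper free factor system, so $\phi$ is irreducible.

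\textbf{Part 2 (fully irreducible iff $b=1$).} For the ``if'' direction, suppose $b = 1$. Then for every $n \ge 1$ the iterate $f^n$ is again a pseudo-Anosov on $\Sigma_g^1$ and trivially acts transitively on the single boundary component, so Part~1 applied to $f^n$ gives that $\phi^n$ is irreducible. Hence $\phi$ is fully irreducible. For the ``only if'' direction, suppose $b \ge 2$; I will exhibit a $\phi^b$-invariant proper free factor system, showing $\phi^b$ is reducible. Since $f$ cyclically permutes the $b$ boundary components, $f^b$ fixes each one setwise, hence $\phi^b$ preserves the conjugacy class of each boundary loop (up to inversion). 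Pick a loop $c_1 \in F$ around $\partial_1$. Because $F = \pi_1(\Sigma_g^b)$ is free of rank $2g + b - 1$, one may choose a free basis of the form $\{c_1, c_2, \dots, c_{b-1}, a_1, b_1, \dots, a_g, b_g\}$ in which each $c_i$ is a loop around $\partial_i$. Then $\langle c_1 \rangle$ is a cyclic free factor of $F$ with complementary free factor $B = \langle c_2, \dots, c_{b-1}, a_1, b_1, \dots, a_g, b_g\rangle$ of rank $2g + b - 2 \ge 1$ (the one excluded case $g = 0, b = 2$ is an annulus, which does not admit pseudo-Anosov maps). Because $\phi^b(c_1) = x c_1^{\pm 1} x^{-1}$ for some $x \in F$, the system $\{\langle c_1 \rangle\}$ is $\phi^b$-invariant, and together with the nontrivial complement $B$ witnesses that $\phi^b$ is reducible, so $\phi$ is not fully irreducible.

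\textbf{Main obstacle.} The bulk of the work lives in Part~1: I have to verify that the train track map produced from collapsing the stable foliation actually has a primitive transition matrix (this is where minimality/mixing of the pseudo-Anosov enters) and then that primitivity of the transition matrix really does rule out every invariant proper free factor system of $\phi$ (not just invariant subgraphs of $\Gamma$). Once Part~1 is established, Part~2 reduces to a clean free-basis computation together with the observation that $f^b$ fixes each boundary component.
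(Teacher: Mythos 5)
Your Part~2 is essentially correct and parallels the paper's brief argument: for $b=1$ all iterates of $f$ remain pseudo-Anosovs acting transitively on the single boundary component, so $\phi$ is fully irreducible; for $b \ge 2$ the boundary loop $c_1$ generates a cyclic free factor with nontrivial complement, and $\phi^b(c_1)$ is conjugate to $c_1^{\pm 1}$, giving a $\phi^b$-invariant proper free factor system (your handling of the degenerate case $g=0, b=2$ is fine since ranks are assumed $\ge 2$ and the annulus admits no pseudo-Anosov).

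Part~1, however, has a genuine gap, and it is exactly the one you flag at the end but cannot be patched: \emph{a single train track representative with primitive transition matrix does not imply the outer automorphism is irreducible}. Primitivity only rules out $f$-invariant proper subgraphs of that particular marked graph $\Gamma$; a $\phi$-invariant proper free factor system need not be carried by a subgraph of $\Gamma$. The cleanest way to see the argument must fail is that it never uses the hypothesis that $f$ acts transitively on the boundary components. A pseudo-Anosov on $\Sigma_g^b$ with $b \ge 2$ that fixes each boundary component also yields a train track with primitive transition matrix (primitivity comes from minimality of the unstable lamination and has nothing to do with boundary dynamics), yet the induced automorphism \emph{is} reducible — each boundary loop determines a $\phi$-invariant cyclic free factor, exactly as in your own Part~2. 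So whatever Part~1 proves, it cannot be irreducibility of $\phi$ under your stated hypotheses.

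The paper's Part~1 takes a completely different route that makes explicit use of transitivity. It supposes $\{A_1, \dots, A_k\}$ is a $\phi$-invariant proper free factor system, passes to the cover $\hat\Sigma \to \Sigma$ corresponding to $A_j$, lifts a power of $f$ to $\hat\Sigma$ preserving the compact core, and looks at a boundary curve $\gamma$ of that core. Using LERF, it finds a finite cover $\bar\Sigma$ in which the projection of $\gamma$ lifts to a simple closed curve $\bar\gamma$; a further power of $f$ lifts to a pseudo-Anosov $\bar f$ of $\bar\Sigma$ fixing $\bar\gamma$, and since periodic simple closed curves of a pseudo-Anosov are peripheral, each $A_j$ must be a cyclic factor corresponding to a boundary component of $\Sigma$. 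Transitivity then forces the system $\{A_1, \dots, A_k\}$ to correspond to \emph{all} boundary components, contradicting the fact that the full set of boundary loops cannot be simultaneously realized as free factors of $\pi_1(\Sigma)$. You would need something of this sort — an argument that converts an invariant free factor system into peripheral data and then invokes transitivity — rather than the train-track primitivity claim.
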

\begin{proof} Let $f : \Sigma \to \Sigma$ be the inducing pseudo-Anosov homeomorphism.
Suppose $\phi$ was reducible, i.e., there exists a $\phi$-invariant proper free factor system $\{ A_1, \ldots, A_k\}$ of $F$. Thus $i_j \phi^k(A_j) \le A_j$ for some inner automorphisms $i_j$. Let $\hat \Sigma\to \Sigma$ be the cover corresponding to $A_j \le F \cong \pi_1(\Sigma)$;
the previous inclusion implies $f^k$ lifts to a homeomorphism $\hat f: \hat \Sigma \to \hat \Sigma$. Furthermore, up to isotopy, $\hat f$ preserves the core of $\hat \Sigma$, a compact subsurface that supports $A_j$. Let $\gamma \subset \hat \Sigma$ be a boundary component of the core. After replacing $f$ with a power, we can assume $\gamma$ is an $\hat f$-fixed simple closed curve. However, the projection of $\gamma$ to $\Sigma$ may not be a simple closed curve.
Using the {\it LERF} property of $F \cong \pi_1(\Sigma)$, construct a finite cover $\bar \Sigma$ such that the projection of $\gamma$ to $\Sigma$ lifts to a simple closed curve $\bar \gamma$ in $\bar \Sigma$.

Since $\phi$ is an automorphism, after passing to a power, we can assume $f$ lifts to a homeomorphism $\bar f: \bar \Sigma \to \bar \Sigma$. This map is pseudo-Anosov since the $f$-invariant measured foliations on $\Sigma$ lift to $\bar f$-invariant measured foliations on $\bar \Sigma$.  After passing to a power again, we can assume $\bar \gamma$ is $\bar f$-fixed. But the periodic nontrivial simple closed curves of a pseudo-Anosov homemorphism are exactly the peripheral curves. Thus, the projection of $\gamma$ to $\Sigma$ is a (power of a) peripheral curve. Since this holds for any boundary component $\gamma$ of the core of $\hat \Sigma$, it must be that $A_j$ is a cyclic free factor corresponding to a boundary component of $\Sigma$.
But $f$ acts transitively on the boundary components, hence there is a one-to-one correpondence between the cyclic free factors $A_1, \ldots, A_k$ and the boundary components of $\Sigma$. This is a contradiction, as all boundary components of a compact surface with boundary cannot be simultaneously realized as free factors of its fundamental group. Therefore, $\phi$ is irreducible.

If $b=1$, then all powers of $f$ are pseudo-Anosovs that act transitively on the boundary component. Thus, all powers of $\phi$ are irreducible.
If $b \ge 2$, then any boundary component of $\Sigma$ determines a periodic proper free factor and some power of $\phi$ is reducible.\end{proof}

For the rest of the appendix, we will prove the converse. Bestvina-Handel use the following proposition to prove that fully irreducible automorphisms with periodic nontrivial conjugacy classes are induced by pseudo-Anosov homeomorphisms of surfaces with one boundary component \cite[Proposition 4.5]{BH92}.

\begin{prop}[{\cite[Lemma 3.9]{BH92}}] Suppose an automorphism $\phi:F \to F$ is fully irreducible and there exists $k \ge 1$ and nontrivial conjugacy class $[c]$ of $F$ such that $[\phi^k(c)] = [c]$.

Then some iterate of $\phi$ has an irreducible train track representative with exactly one (unoriented) indivisible Nielsen loop which covers each edge of the graph twice.

In particular, if $k$ is minimal, then $k \le 2$ with equality if and only if $[\phi(c)] = [c^{-1}]$.
\end{prop}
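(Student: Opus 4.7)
The plan is to combine the Bestvina--Handel train track technology for fully irreducible automorphisms with a rigidity argument that leverages full irreducibility to control Nielsen paths. First, since $\phi$ is fully irreducible, some positive iterate $\phi^N$ admits an irreducible train track representative $f:\Gamma \to \Gamma$ with Perron--Frobenius transition matrix and stretch factor $\lambda > 1$. Replacing $\phi$ by a further iterate if necessary, the periodic class $[c]$ is represented by a cyclically reduced circuit $\sigma$ in $\Gamma$ with $[f(\sigma)] = [\sigma]$.

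Next, I would show that $\sigma$ decomposes as a concatenation of \emph{indivisible Nielsen paths} (INPs). The train track condition implies that legal subpaths of $\sigma$ stretch by $\lambda$ under $f$, so any legal segment surviving tightening would force $[f^n(\sigma)]$ to grow unboundedly, contradicting $[f(\sigma)] = [\sigma]$. Hence every legal segment of $\sigma$ must be consumed by cancellation at illegal turns upon tightening --- which is precisely the defining property of a Nielsen path. Each INP factor then has the standard form $\alpha \bar\beta$, where $\alpha$ and $\beta$ are legal paths meeting at the INP's unique illegal turn and satisfy $f(\alpha) = \alpha\mu$, $f(\beta) = \beta\mu$ for a common legal tail $\mu$.

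The main obstacle is proving uniqueness of the unoriented INP. The strategy is a rigidity argument: if $\Gamma$ admitted two $f$-inequivalent INPs, their endpoints (which must be periodic vertices of $f$) and their rigid edge supports would assemble --- after collapsing the complementary expanding subgraph and passing to a core --- into a $\phi^N$-invariant proper free factor system of $F$, contradicting full irreducibility. This is the delicate step; it depends on the combinatorial rigidity of INPs (endpoints pinned to periodic vertices, edge support constrained by the $\alpha\bar\beta$ form) together with the observation that Perron--Frobenius irreducibility of the transition matrix prevents any proper $f$-invariant subgraph. Once uniqueness is in hand, the edge-twice property follows: the unique Nielsen loop is a cyclic arrangement of one INP $\alpha\bar\beta$, so the edge-multiplicity of each edge in $\sigma$ is the sum of its multiplicities in $\alpha$ and $\bar\beta$; Perron--Frobenius irreducibility forces this sum to be positive on every edge, and an Euler characteristic matching (recognizing $\Gamma$ as a spine of a once-punctured surface with $\sigma$ as its boundary curve) pins the multiplicity to exactly two.

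Finally, for the clause about minimal $k$: since the Nielsen loop is unique as an unoriented curve, $\phi$ either preserves its orientation --- giving $k = 1$ and $[\phi(c)] = [c]$ --- or reverses it --- giving $k = 2$ and $[\phi(c)] = [c^{-1}]$. No other minimal period is possible because any $\phi$-periodic conjugacy class is represented by a power of this unique Nielsen loop.
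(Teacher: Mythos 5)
The statement you're proving is cited in the paper directly from Bestvina--Handel (Lemma~3.9 of~\cite{BH92}); the paper itself does not reprove it but does sketch a generalization (Proposition~\ref{propStab}) for irreducible infinite-order endomorphisms, following the same stability machinery of~\cite[Section~3]{BH92}. Your proposal diverges from that approach in the two places that carry the real weight, and in both the argument as written has a genuine gap.

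\textbf{Uniqueness of the INP.} You propose that two $f$-inequivalent INPs would, via their ``endpoints and rigid edge supports,'' assemble into a $\phi^N$-invariant proper free factor system. This doesn't go through. An INP is a tight path, not a subgraph, and distinct INPs typically run over overlapping (often all) edges of $\Gamma$ --- precisely because the transition matrix is Perron--Frobenius irreducible, which you correctly observe rules out proper invariant subgraphs. So there is no ``complementary expanding subgraph'' to collapse and no natural candidate for an invariant free factor system arising from the pair. The actual mechanism in Bestvina--Handel (and in the paper's Proposition~\ref{propStab}) is the notion of a \emph{stable} train track representative, obtained by repeatedly folding at the illegal turns of orbits of indivisible Nielsen paths and collapsing the invariant forests created by partial folds. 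Uniqueness is then forced by a volume bookkeeping argument with the Perron--Frobenius eigenmetric: each full fold reduces $\vol(\Gamma)$ by $x$ and the Nielsen path length by $2x$, so finiteness of the projective classes of graphs forces the critical equation $\vol(\rho) = 2\vol(\Gamma)$; two orbits of INPs not sharing all illegal turns would break the critical equation for one of them after folding the other. None of this appears in your sketch, and ``assemble into a free factor system'' is not a substitute for it.

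\textbf{Edge-multiplicity two.} You argue that each edge appears in $\sigma$ with positive multiplicity by Perron--Frobenius irreducibility and is ``pinned to exactly two'' by an Euler characteristic match ``recognizing $\Gamma$ as a spine of a once-punctured surface with $\sigma$ as its boundary curve.'' This is circular: the statement ``each edge covered exactly twice'' is exactly the combinatorial input that Bestvina--Handel's Proposition~4.5 uses to \emph{construct} the surface by attaching an annulus along $\sigma$ and checking the result is a manifold; you cannot invoke the surface structure to prove the multiplicity count. (Also, PF irreducibility of the transition matrix does not imply that the single INP $\alpha\bar\beta$ crosses every edge; that again comes out of the critical-equation argument, since an edge not crossed by $\rho$ would make the equality $\vol(\rho)=2\vol(\Gamma)$ impossible to maintain under folding.) In the correct argument, ``at least once'' follows from the volume bookkeeping, and ``at least twice'' (hence exactly twice, by the critical equation) uses full irreducibility: an edge crossed exactly once yields a cyclic free factor fixed up to conjugacy by some power of $\phi$, contradicting fully irreducible.

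Your initial reduction (a periodic circuit in an expanding irreducible train track must decompose into Nielsen paths because legal segments grow unboundedly) is essentially right, and the final remark on minimal $k$ is fine once uniqueness is actually established. But the middle two steps need the stable-representative and volume machinery to work; as written they do not constitute a proof.
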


We start by extending this proposition to irreducible endomorphisms. For brevity, we assume familiarity with Bestvina-Handel's argument \cite[Section 3]{BH92}. The main change is we work with periodic indivisible Nielsen paths (of a fixed period) rather than indivisible Nielsen paths. The argument in the final paragraph is different too: unlike Bestvina-Handel's argument where periodic proper free factors are enough to contradict full irreducibility, we construct a fixed proper free factor system to contradict irreducibility.

\begin{prop}\label{propStab} Suppose an endomorphism $\phi:F \to F$ is irreducible, it has infinite-order, and there exists $k \ge 1$ and nontrivial conjugacy class $[c]$ of $F$ such that $[\phi^k(c)] = [c]$.

Then there exists an irreducible train track representative $f:\Gamma \to \Gamma$ with exactly one $f$-orbit of (unoriented) periodic indivisible Nielsen paths that make up an $f$-orbit of (unoriented) periodic Nielsen loops that collectively cover each edge of $\Gamma$ twice.
\end{prop}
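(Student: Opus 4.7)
The plan is to adapt Bestvina-Handel's proof of \cite[Lemma 3.9]{BH92} to the irreducible (not necessarily fully irreducible) setting, working throughout with periodic indivisible Nielsen paths of a fixed period in place of fixed indivisible Nielsen paths. Since $\phi$ is infinite-order and irreducible, the Bestvina-Handel algorithm produces an irreducible train track representative $f_0 \colon \Gamma_0 \to \Gamma_0$ with Perron-Frobenius transition matrix of eigenvalue $\lambda > 1$. After passing to a suitable iterate, the periodic conjugacy class $[c]$ is represented by a closed edge path $\alpha$ with $f_0^K(\alpha) \simeq \alpha$, so that $\alpha$ is a periodic Nielsen loop from which the whole analysis will be run.

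Next I would run Bestvina-Handel's folding procedure applied to this periodic data: iteratively fold along the common initial segment of the two legal paths that meet at an illegal turn lying in some periodic iNp. As in \cite[Section 3]{BH92}, each such fold yields a simpler train track map, and irreducibility plus the Perron-Frobenius structure are preserved because the folds take place inside a bounded-length subpath of a periodic Nielsen loop rather than arbitrarily in the graph. When the process terminates one has an irreducible train track $f \colon \Gamma \to \Gamma$ in which the unoriented periodic iNp's are in natural bijection with the illegal turns of $f$, and every periodic Nielsen loop decomposes canonically as a concatenation of periodic iNp's.

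The main obstacle, and the point at which the argument genuinely departs from \cite{BH92}, is ruling out more than one $f$-orbit of unoriented periodic iNp's. In the fully irreducible setting, two orbits produce a \emph{periodic} proper free factor supported on the subgraph spanned by the edges covered by one orbit, which immediately contradicts full irreducibility. Here I would instead intersect across one full period: letting $p$ be the period and $\Gamma_1 \subseteq \Gamma$ be the subgraph spanned by one orbit, the subgraph $\bigcap_{j = 0}^{p - 1} f^j(\Gamma_1)$ is $f$-invariant, and its noncontractible components determine a genuinely $\phi$-invariant proper free factor system of $F$, contradicting irreducibility of $\phi$ directly. The same intersection trick forces the surviving unique orbit to cover every edge of $\Gamma$, for otherwise the uncovered edges span a proper $f$-invariant subgraph and give a fixed proper free factor system by the same argument.

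Finally, the ``each edge exactly twice'' count is identical to the one in \cite[Section 3]{BH92}: after folding, each illegal turn is the terminal pair of gates of exactly one unoriented periodic iNp, so each half-edge of $\Gamma$ appears as the terminal half-edge of exactly one iNp, and hence each edge receives a total of two traversals when the iNp's are assembled into the orbit of periodic Nielsen loops.
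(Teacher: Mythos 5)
Your setup and the reorientation toward periodic iNp's of a fixed period are correct and match the paper. But the two places where you ``genuinely depart from \cite{BH92}'' are exactly where the argument breaks, and you also gloss over the termination issue.

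The intersection argument for uniqueness does not work. You let $\Gamma_1$ be the subgraph spanned by one orbit of periodic iNp's and consider $\bigcap_{j=0}^{p-1} f^j(\Gamma_1)$. Two problems. First, nothing forces $\Gamma_1$ to be a \emph{proper} subgraph: a single orbit of piNp's can (and, in the end, must) cover every edge, so $\Gamma_1 = \Gamma$ is entirely possible even when a second orbit exists, and then the intersection is all of $\Gamma$ and yields no free factor system. Second, and more fatally, $f$ is an \emph{irreducible} train track map, which means precisely that the transition matrix is irreducible; hence for any noncontractible subgraph $\Gamma_1$ and $j$ large, $f^j(\Gamma_1)$ contains every edge of $\Gamma$. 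Your intersection is therefore $\Gamma$ essentially always, and the argument produces nothing. The paper's Claim 2 instead uses the Perron--Frobenius eigenmetric and the \emph{critical equation} $\vol(\{\rho_i\}) = 2\vol(\Gamma)$: since folding must keep this equation for \emph{every} orbit of piNp's (because there are only finitely many projective classes of graphs to land in), two orbits would either have to share all illegal turns (impossible when all folds are full folds) or one fold would decrease $\vol(\Gamma)$ while leaving the other orbit's volume fixed, breaking its critical equation. You have no substitute for this volume-based rigidity.

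Your ``exactly twice'' count is also not right: an unoriented piNp $\rho = \alpha\bar\beta$ has two terminal half-edges, but the edges in the interior of $\alpha$ and $\beta$ are traversed as well, and an edge can appear in the interior of several piNp's. Counting terminal half-edges gives you nothing about interior occurrences. The paper again uses the critical equation: it first shows each edge appears at least once (else the critical equation could not persist under folding), then rules out ``exactly once'' via a new argument that builds a \emph{fixed} cyclic free factor system from an edge appearing once in some $s_n$ --- note this step carefully uses injectivity and preimages, not automorphism, which is the genuinely new content over \cite{BH92} --- and finally the equality $\vol(\{s_i\}) = 2\vol(\Gamma)$ forces ``exactly twice.'' Finally, you assert the folding procedure terminates ``because the folds take place inside a bounded-length subpath,'' but that is not the mechanism: the paper distinguishes full folds (which do not increase the vertex count and so live in finitely many projective classes) from partial folds, defines a \emph{stable} representative, and in the non-stable case collapses an invariant forest created by a homotopy to strictly reduce the number of piNp's. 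Without the stability dichotomy you cannot justify termination or the later rigidity arguments.
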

\begin{proof}[Sketch proof]
Represent $\phi$ by an expanding irreducible train track map $f:\Gamma \to \Gamma$. Let $\sigma$ be a loop representing $[c]$. By hypothesis, $f^k(\sigma) \simeq \sigma$. Break $\sigma$ into maximal legal segments $\sigma_0, \ldots, \sigma_{s-1}$, and note that, since $f$ is a train track, tightening the loop $f^k(\sigma)$ produces a cyclic permutation of the maximal legal segments $\sigma_i, \ldots \sigma_{s-1+i}$ for some $i$. Then $f^{ks}(\sigma)$ will give the original maximal legal segments $\sigma_0, \ldots, \sigma_s$. In particular, as $f$ is expanding, each segment $\sigma_j$ has a $f^{ks}$-fixed point and $\sigma$ is an $f^{ks}$-Nielsen loop, or equivalently, $\sigma$ is an $f$-periodic Nielsen loop.

The goal is to replace $f$ with another train track representative such that the $f$-orbit of $\sigma$ is a set of (unoriented) loops that collectively cover each edge of $\Gamma$ twice.

For the rest of the proof, a periodic indivisible Nielsen path (piNp) will refer to an indivisible Nielsen path of the fixed $ks$-iterate. It follows from the bounded cancellation lemma that there are finitely many piNps.

The $f$-orbit of $\sigma$ breaks into piNps; just like indivisible Nielsen paths, a piNp $\rho$ can be uniquely written as a concatenation of two legal paths $\rho = \alpha \beta$. By an $f$-orbit of unoriented piNps, we mean the minimal sequence $\{ \rho_0 = \rho, \rho_1 = f(\rho), \ldots, \rho_m = f^m(\rho) \}$ such that $f^{m+1}(\rho) = \rho \text{ or } \bar \rho$ for some piNp $\rho$.
If we write each $\rho_i$ in the $f$-orbit of $\rho$ as $\rho_i = \alpha_i \beta_i$, then we get $f(\alpha_{i-1}) = \alpha_i \tau_i$ and $f(\beta_{i-1}) = \bar \tau_i \beta_i$, with the exception at the end where possibly (due to reversal of orientation)
$f(\alpha_m) = \bar \beta_0 \tau_0$ and $f(\beta_m) = \bar \tau_0 \bar \alpha_0$. Since $f$ is expanding, at least one of the paths $\tau_i$ is nontrivial.

We will now describe the process of folding an $f$-orbit of piNps. Given an $f$-orbit of piNps, $\{\alpha_0\beta_0, \ldots, \alpha_m\beta_m\}$, we know that at least one of the corresponding $\tau_i$ is nontrivial. Then we can fold at the turn between $\alpha_{i-1}$ and $\beta_{i-1}$. As more than one of the $\tau_i$ may be nontrivial, the full folds take precedence over the partial folds.

We say $f$ is {\bf stable} if the process of folding its $f$-orbits of piNps can always be done by full folds. If $f$ has no piNps, then it is vacuously stable. Since full folds do not increase the number of vertices in the graph, the process of folding a stable representative produces finitely many {\it projective (equivalence) classes} of graphs.

\begin{claim*} $\phi$ has a stable representative.\end{claim*}

Suppose $f$ was not stable. Then after doing some preliminary full folds, $f$ has an $f$-orbit of piNps whose only possible folds are partial folds. Fold this orbit enough times so that every turn ${\bar \alpha_i, \beta_i}$ is at a trivalent vertex. Now apply a homotopy so that the segments $\{\alpha_0\beta_0, \ldots, \alpha_m\beta_m\}$ are isometrically and cyclically permuted. Since $\phi$ is irreducible, these segments form an invariant forest which we can collapse to create a representative with strictly fewer piNps. As there were finitely many piNps to begin with, this process will terminate with a stable representative. This ends the claim.

\begin{claim*} A stable representative $f:\Gamma \to \Gamma$ has at most one $f$-orbit of piNps.\end{claim*}

Assume there is at least one $f$-orbit of piNps $\{\rho_i\}_{i=0}^m$. Let $\Gamma$ have a {\it Perron-Frobenius eigenmetric} and denote with $\vol(\Gamma)$ the sum of all the edge lengths.
Folding an $f$-orbit of piNps $\{\rho_i\}_{i=0}^m$ produces a graph $\Gamma'$ with $\vol(\Gamma') = \vol(\Gamma) - x$ and an $f'$-orbit of piNps $\{\rho_i'\}_{i=0}^m$ of $\Gamma'$ with $\vol(\{\rho_i'\}_{i=0}^m) = \vol(\{\rho_i\}_{i=0}^m) - 2x$ for some $x > 0$; the volume of a collection of paths is the sum of their lengths.
Since there are finitely many projective classes of graphs, the graph $\Gamma$ and $f$-orbit $\{\rho_i\}_{i=0}^m$ must satisfy the {\bf critical equation} $\vol\left(\{\rho_i\}_{i=0}^m\right) = 2  \vol(\Gamma)$. Note that this equation holds for any $f$-orbit of piNps.
Fix one such $f$-orbit $\{\rho_i\}_{i=0}^m$ and suppose there were an $f$-orbit $\{r_i\}_{i=0}^n$. If $\{\rho_i\}_{i=0}^m$ and $\{r_i\}_{i=0}^n$ did not share all their illegal turns, then folding $\{\rho_i\}_{i=0}^m$ would eventually decrease $\vol(\Gamma)$ while leaving $\vol(\{r_i\}_{i=0}^n)$ the same.
This would break the critical equation for $\{r_i\}_{i=0}^n$, contradicting stability. Therefore, all $f$-orbits of piNps have the same set of illegal turns. Since each fold in a stable representative is a full fold, there cannot be two distinct $f$-orbits of piNps that share the same illegal turns and maintain the critical equation throughout all the folds. This ends the second claim.

\begin{claim*} Stable representative $f:\Gamma \to \Gamma$ has exactly one $f$-orbit of piNps that make up an $f$-orbit of periodic Nielsen loops that collectively cover each edge of $\Gamma$ twice.\end{claim*}
The process of folding an $f$-orbit of piNps preserves periodic Nielsen loops. Since we started with a representative in which $\sigma$ is a periodic Nielsen loop, we know $f$ has a periodic Nielsen loop in $\Gamma$. In particular, the loop splits into piNps and therefore $f$ has exactly one $f$-orbit of piNps by the previous claim. This orbit makes up an $f$-orbit of periodic Nielsen loops containing $\sigma$. Let $\{s_i\}_{i=0}^n$  be an $f$-orbit of periodic Nielsen loops formed by concatenating paths in the $f$-orbit of piNps. To maintain the critical equation, folding $\{s_i\}_{i=0}^n$ must eventually reduce the lengths of all edges; therefore, every edge of $\Gamma$ appears at least once in $\{s_i\}_{i=0}^n$.

Suppose some edge of $\Gamma$ appeared exactly once in $\{s_i\}_{i=0}^n$, say in $s_n$. Then the loop $s_n$ determines a cyclic free factor $C_n$ of $F$ such that $F = B * C_n$ where all the other loops $s_i~(i \neq n)$ determine (conjugacy classes of) elements in $B$. As $\phi$ is injective, $s_1^{\pm1} = [f(s_n)]$ determines a cyclic free factor of $\phi(F) \cap B$;
since $s_n$ and $s_1$ can be simultaneously realized as free factors of $\phi(F)$ and $\phi$ is injective, the loops $s_{n-1}$ and $s_n$ determine cyclic free factors $C_{n-1}, C_n$ of $F$ that can be simultaneously realized, i.e.,  $F = B' * C_{n-1} * C_n$. Note that we use preimages to get the free factors of $F$ since we did not assume $\phi$ was an automorphism. Iterate this process to show $\{s_i\}_{i=0}^n$ determines a $\phi$-fixed cyclic free factor system $\{C_0, \ldots, C_n \}$. This contradicts the irreducibility assumption on $\phi$.
Thus every edge of $\Gamma$ appears at least twice in $\{s_i\}_{i=0}^n$ and the critical equation implies every edge appears exactly twice. This concludes the third claim and proof of the proposition.
\end{proof}

 The proof of the next theorem follows that of Bestvina-Handel \cite[Proposition~4.5]{BH92}. We give an outline with a modification that extends the argument to injective endomorphisms.

\begin{thm}\label{thmBH2} Suppose an endomorphism $\phi:F \to F$ is irreducible, it has infinite-order, and there exists $k \ge 1$ and nontrivial conjugacy class $[c]$ of $F$ such that $[\phi^k(c)] = [c]$.

Then $\phi$ is an automorphism induced by a pseudo-Anosov homeomorphism of a compact surface $\Sigma_g^{b \ge 1}$ that acts transitively on the boundary components.
\end{thm}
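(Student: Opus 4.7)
The plan is to follow Bestvina--Handel's \cite[Proposition~4.5]{BH92} surface-construction strategy while modifying the argument to handle the non-automorphism case. Starting from Proposition~\ref{propStab}, I obtain an irreducible train-track representative $f:\Gamma\to\Gamma$ of $\phi$ together with an $f$-orbit of periodic Nielsen loops $\{s_0,\ldots,s_n\}$ (indices mod $n+1$, with $s_{i+1}$ obtained by tightening $f(s_i)$) that collectively traverse each edge of $\Gamma$ exactly twice. The goal is to use these data to manufacture a compact surface with boundary whose spine is $\Gamma$, whose boundary components are precisely the Nielsen loops, and on which $\phi$ is realized by a pseudo-Anosov homeomorphism that cyclically permutes the boundary.

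The double-coverage is the key ingredient for imposing a ribbon (fat) graph structure on $\Gamma$: at each vertex $v$, each incident half-edge is used exactly twice by the Nielsen loops, so the entries/exits of the $s_i$ at $v$ pair up the half-edge incidences into turns, and these pairings determine a cyclic order of the half-edges around $v$. Thickening $\Gamma$ with this cyclic order produces a compact surface $\Sigma = \Sigma_g^{n+1}$ with $\pi_1(\Sigma)\cong F$ and with $n+1$ boundary components realized by $s_0,\ldots,s_n$. Because $f$ permutes the Nielsen loops cyclically and is locally injective on edges, it preserves the ribbon structure and extends to a self-map $\hat f:\Sigma\to\Sigma$ cyclically permuting the boundary components. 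The Perron--Frobenius eigenmetric on $\Gamma$ together with the ribbon structure then yield two transverse $\hat f$-invariant measured foliations on $\Sigma$ with stretch factor equal to the Perron--Frobenius eigenvalue of $f$; once $\hat f$ is shown to be a homeomorphism, these foliations certify $\hat f$ as pseudo-Anosov, while irreducibility of $\phi$ excludes the existence of a fixed essential multicurve, and transitivity on the boundary components is automatic from the $f$-orbit structure.

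The main obstacle is to show that $\hat f$ is actually a homeomorphism and hence that $\phi$ is an automorphism---Bestvina--Handel sidestepped this because they assumed $\phi$ was an automorphism from the start. My plan here is to exploit the critical volume equation $\vol(\{s_i\}_{i=0}^n) = 2\vol(\Gamma)$ established during the proof of Proposition~\ref{propStab}: any genuine folding by $\hat f$ of the ribbon structure would strictly decrease the total length of the Nielsen loops while leaving $2\vol(\Gamma)$ the same under the $f$-action, contradicting this equality. Combined with the observation that $\hat f$ maps $\partial\Sigma$ to $\partial\Sigma$ by a cyclic permutation, so has degree one on the boundary and hence degree one on $\Sigma$, this forces $\hat f$ to be a local homeomorphism of degree one, therefore a homeomorphism. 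Then $\phi = \hat f_*$ is an automorphism induced by the pseudo-Anosov $\hat f$, and the theorem follows.
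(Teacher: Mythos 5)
Your overall strategy matches the paper's: use Proposition~\ref{propStab} to obtain a train track with periodic Nielsen loops covering each edge twice, thicken to a surface with those loops as boundary, then argue irreducibility forces the resulting homeomorphism to be pseudo-Anosov. There are, however, two genuine gaps.

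First, the ribbon structure is not automatic. At a vertex $v$, the pairings of half-edges coming from the Nielsen loops need not determine a single cyclic order; they may decompose into several cycles, yielding a singular (non-surface) point. The paper explicitly flags this: $M$ is a surface ``except at finitely many singularities,'' and one needs a blow-up argument together with irreducibility of $\phi$ to rule out those singularities. You assert the cyclic order exists without addressing this.

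Second, and more seriously, your mechanism for proving $\phi$ is surjective does not work. You claim the critical volume equation prevents $\hat f$ from ``genuinely folding'' the ribbon structure and hence that $\hat f$ is a local homeomorphism. But $f$ is a train track map with genuine folds (it is expanding and typically not a local homeomorphism even for automorphisms), and any extension $\hat f$ of $f$ to $\Sigma$ inherits these folds; the critical equation $\vol(\{s_i\}) = 2\vol(\Gamma)$ is a static relation between the graph metric and the Nielsen loops, not a constraint on how $f$ can fold. Nor does degree rescue this: even granting that $\hat f$ permutes boundary circles with degree $\pm1$, concluding relative degree one on $H_2(\Sigma,\partial\Sigma)$ requires the boundary degrees to be consistently $+1$ (or consistently $-1$), which you have not shown; and deducing ``homeomorphism'' from ``degree one'' requires $\hat f$ to already be a covering map, which it is not. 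The paper's way around this obstacle is quite different and much more robust: double the surface along its boundary to get a closed hyperbolic surface $D(M)$, observe that $\phi *_\partial \phi$ is an injective endomorphism of the closed surface group $\pi_1(D(M))$, invoke coHopficity of closed surface groups to get surjectivity, and then apply Dehn--Nielsen--Baer to realize $\phi$ by a homeomorphism. You should replace your degree/local-homeomorphism argument with this doubling-and-coHopficity step, or with an equivalent group-theoretic argument; as written, the surjectivity of $\phi$ is not established.
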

\begin{proof}[Sketch proof] Apply Proposition \ref{propStab} to get an irreducible train track $f:\Gamma \to \Gamma$ with an $f$-orbit of (unoriented) periodic indivisible Nielsen loops that collectively cover each edge of $\Gamma$ twice. Let $b \ge 1$ be the number of these periodic Nielsen loops. For each periodic Nielsen loop, attach an annulus by gluing one end along the loop and call the resulting space $M$; $M$ contains $\Gamma$ as a deformation retract, so $\pi_1(M) \cong F$.

Since the loops collectively cover each edge twice, the space $M$ is a surface except at finitely many singularities. Use the blow-up trick and the irreducibility of $\phi$ to conclude $M$ is a surface. Since $f$ transitively permutes the periodic Nielsen loops (up to homotopy, possibly reversing orientation), the map $f$ extends to a map $g:M \to M$ that transitively permutes the components of $\partial M$ such that $g_* = f_* = \phi$; so $g$ is $\pi_1$-injective.

Let $D(M)$ be the closed hyperbolic surface obtained by gluing two copies of $M$ along their boundary components. The map $g$ induces a $\pi_1$-injective map $g \cup_\partial g : D(M) \to D(M)$ such that $(g \cup_\partial g)_* = \phi *_\partial \phi$. But closed hyperbolic surfaces have coHopfian fundamental groups (classification of surfaces), therefore $\phi *_\partial \phi$ is an automorphism. This implies $\phi$ is an automorphism and $g$ is homotopic to a homeomorphism (Dehn-Nielsen-Baer theorem).

Assume $g$ is a homeomorphism. Any $g$-invariant collection of disjoint essential simple closed curves of $g$ determines a reduction of $\phi$, thus the irreducibility of $\phi$ implies $g$ is an infinite-order irreducible homeomorphism. By Nielsen-Thurston classification, the map $g$ is isotopic to a pseudo-Anosov homeomorphism of $M = \Sigma_g^b$ that acts transitively on the boundary components.
\end{proof}

\begin{cor}\label{irredIWIP} If $\phi:F \to F$ is an infinite-order irreducible endomorphism that is not fully irreducible, then $\phi$ is induced by a pseudo-Anosov homeomorphism of a compact surface $\Sigma_g^{b \ge 2}$ that acts transitively on the boundary components.
\end{cor}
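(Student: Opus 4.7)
The plan is to derive the corollary as a direct consequence of the two main appendix results together with the implication \emph{irreducible and atoroidal implies fully irreducible} cited from \cite{DKL15, JPM} in the introduction. No new construction is needed; everything is already in place, and the argument is essentially a short chain of implications.

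First I would use the hypothesis that $\phi$ is not fully irreducible to produce a periodic nontrivial conjugacy class. Suppose toward a contradiction that $\phi$ is atoroidal. Then $\phi$ is an infinite-order irreducible and atoroidal endomorphism, so by the previously established fact that such endomorphisms are automatically fully irreducible, $\phi$ would be fully irreducible, contradicting the hypothesis. Hence $\phi$ cannot be atoroidal: there exist a nontrivial element $c \in F$ and an integer $k \geq 1$ with $[\phi^k(c)] = [c]$.

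With this periodic conjugacy class in hand, I would feed $\phi$ into Theorem~\ref{thmBH2}. Since $\phi$ is infinite-order, irreducible, and admits a periodic nontrivial conjugacy class, that theorem concludes that $\phi$ is induced by a pseudo-Anosov homeomorphism of some compact surface $\Sigma_g^{b \geq 1}$ that acts transitively on the boundary components. To upgrade $b \geq 1$ to $b \geq 2$, I would invoke the final clause of Proposition~\ref{eg}: a $\phi$ induced in this way is fully irreducible precisely when $b = 1$. Because $\phi$ is not fully irreducible by hypothesis, we must have $b \geq 2$, which is exactly the claim.

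The main obstacle, if one can call it that, is really just a sanity check: one must confirm that each ingredient is valid for injective endomorphisms and not only for automorphisms. Theorem~\ref{thmBH2} and Proposition~\ref{eg} are stated in precisely the generality we need, and the implication \emph{irreducible and atoroidal implies fully irreducible} is asserted for endomorphisms in the introductory remark to Section~\ref{defs}. Given these, the proof of the corollary reduces to the three-line chain of deductions described above.
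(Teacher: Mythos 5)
Your proof is correct and follows essentially the same chain of deductions as the paper: establish a periodic nontrivial conjugacy class from the failure of full irreducibility (via the fact that irreducible and atoroidal implies fully irreducible), apply Theorem~\ref{thmBH2} to obtain a pseudo-Anosov on $\Sigma_g^{b\ge 1}$ acting transitively on the boundary, and rule out $b=1$ via Proposition~\ref{eg}. The only cosmetic difference is that the paper derives the first step from Proposition~\ref{invSbgrp} (with \cite{DKL15,JPM} as a secondary citation), while you cite \cite{DKL15,JPM} directly.
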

\begin{proof} It follows from Proposition~\ref{invSbgrp} that for atoroidal endomorphisms, irreducible implies fully irreducible (see also \cite{DKL15, JPM}). So $\phi$ has a nontrivial periodic conjugacy class. By Theorem~\ref{thmBH2}, $\phi$ is induced by a pseudo-Anosov on a surface $\Sigma_g^{b \ge 1}$ that acts transitively on the boundary components. If $\phi$ is not fully irreducible, then $b \ge 2$ by Proposition~\ref{eg}.\end{proof}

%\nocite{*} % Insert publications even if they are not cited in the poster
\bibliography{refs}
\bibliographystyle{plain}
\vspace{-0.5em}
\end{document}